\documentclass[12pt]{amsart}
\usepackage[left=1in,top=1in,right=1in,bottom=1in]{geometry}

\usepackage{amsthm,amsmath,amsfonts,amssymb}
\usepackage{mathrsfs}
\usepackage{url}
\usepackage[pdftex]{graphicx}
\usepackage[colorlinks=true,linkcolor=blue,urlcolor=blue, citecolor=blue]{hyperref}
\usepackage[draft]{fixme}
\usepackage{amscd}
\usepackage{xcolor}
\usepackage{ifpdf} 

\ifpdf
\DeclareGraphicsRule{*}{mps}{*}{}
\else
\fi

\numberwithin{equation}{section}
\numberwithin{table}{section}

\theoremstyle{plain}
\newtheorem{thm}{Theorem}[section]
\newtheorem{lem}[thm]{Lemma}

\newtheorem{prop}[thm]{Proposition}
\newtheorem{cor}[thm]{Corollary}
\theoremstyle{definition}

\newtheorem{definition}[thm]{Definition}

\newtheorem{exmp}[thm]{Example}
\theoremstyle{remark}
\newtheorem{rem}[thm]{Remark}

\makeatletter
\newif\if@restonecol
\makeatother

\usepackage[boxed,vlined,lined]{algorithm2e}
\usepackage{algorithmic}

\makeatletter
\@namedef{subjclassname@2010}{%
 \textup{2010} Mathematics Subject Classification}
\makeatother

\newcommand \NN {\mathbb{N}} 
\newcommand \QQ {\mathbb{Q}} 
\newcommand \ZZ {\mathbb{Z}}

\newcommand \RR {\mathbb{R}}  

\newcommand \PP {\mathbb{P}}


\newcommand{\sminus}{\ensuremath{\!\setminus\!}}
 

\newcommand{\bideg}{{\operatorname{bideg}}}

\renewcommand{\geq}{\geqslant}
\renewcommand{\leq}{\leqslant}

\DeclareMathOperator{\conv}{\operatorname{conv}}

\DeclareMathOperator{\mmult}{\operatorname{mm}}
\DeclareMathOperator{\vol}{\operatorname{vol}}
\DeclareMathOperator{\area}{\operatorname{area}}
 \DeclareMathOperator{\perim}{\operatorname{ perim}}
 
  \DeclareMathOperator{\codim}{\operatorname{codim}}
  \DeclareMathOperator{\cay}{\operatorname{Cay}}

\newcommand \ww {\omega}

\newcommand{\MV}{\operatorname{MV}}

\newcommand{\Hcal}{\mathcal{H}}

\newcommand{\Ecal}{\mathcal{E}}
\newcommand{\Pcal}{\mathcal{P}}

\newcommand{\init}{\operatorname{in}}

\newcommand{\zspan}[1]{\ensuremath{\ZZ\!\cdot\! #1}}
\newcommand{\rspan}[1]{\ensuremath{\RR\!\cdot\!#1}}

\title{Mixed Discriminants}

\author[E.\ Cattani]{Eduardo Cattani}
\author[M.\ A.\ Cueto]{Mar\'{\i}a Ang\'elica Cueto} 
\author[A.\ Dickenstein]{Alicia Dickenstein}
\author[S.\ Di Rocco]{\\ Sandra Di Rocco}
\author[B.\ Sturmfels]{Bernd Sturmfels}

\subjclass[2010]{13P15, 14M25, 14T05, 52B20}
\keywords{A-discriminant, degree, multiple root, Cayley polytope, tropical discriminant, matroid strata, mixed Grassmannian}

\begin{document}

\begin{abstract}
The mixed discriminant of $n$ Laurent polynomials in $n$ variables
is the irreducible polynomial in the coefficients which vanishes whenever 
two of the roots coincide. The Cayley trick expresses
the mixed discriminant as an $A$-discriminant.
We show that  the degree of the mixed discriminant
is a piecewise linear function in the Pl\"ucker
  coordinates of a mixed Grassmannian. 
 An explicit  degree formula is given for the case of plane~curves.  
\bigskip

\centerline{\em Dedicated to the memory of our friend Mikael Passare
(1959--2011)}
  \end{abstract}

\maketitle
\section{Introduction}
\label{sec:introduction}

A fundamental topic in mathematics 
and its applications is the study of systems of $n$ polynomial equations in $n$ unknowns
$x = (x_1,x_2,\ldots,x_n)$ over an algebraically closed field~$K$:
\begin{equation}
\label{eq:givensystem}
f_1(x) \,=\, f_2(x) \,=\, \cdots \, = \, f_n(x) \,\,=\,\, 0 . 
\end{equation}
Here we consider Laurent polynomials with fixed support sets
$A_1,A_2,\ldots,A_n \subset \mathbb{Z}^n$:
\begin{equation}\label{eq:efs} \qquad
f_i(x) \,= \, \sum_{a \in A_i} c_{i,a} x^a \qquad \qquad (i=1,2,\ldots,n).
\end{equation}
If the coefficients $c_{i,a}$ are generic then,
according to {\em Bernstein's Theorem}~\cite{Bernstein},
 the number of solutions of~\eqref{eq:givensystem} in the
algebraic torus $(K^*)^n$ equals the {\em mixed volume}
$\,\MV(Q_1,Q_2,\ldots,Q_n)$ of the Newton polytopes $Q_i =
{\conv}(A_i)$ in $\mathbb{R}^n$.  However, for
special choices of the coefficients $c_{i,a}$, two or more of these
solutions come together in $(K^*)^n$ and create a point of higher
multiplicity.  The conditions under which this happens are encoded 
in an irreducible polynomial in the coefficients, whose zero locus 
is the {\em variety of ill-posed systems} \cite[\S I-4]{SS}.  
While finding this polynomial is
 usually beyond the reach of symbolic computation, it is often 
 possible to describe some of its invariants.  Our aim here is to characterize its degree.

An isolated solution $u \in (K^*)^n$ of~\eqref{eq:givensystem} is a {\em
  non-degenerate multiple root} if the $n$ gradient
  vectors $\nabla_x f_i(u)$ are linearly dependent, but any $n-1$ of them are linearly independent.  
This condition means that $u$ is a regular point on the curve defined
by any $n-1$ of the equations in~\eqref{eq:givensystem}.  We define the 
\emph{discriminantal variety} as the closure of the locus of coefficients $c_{i,a}$ for which the associated system~\eqref{eq:givensystem} has a non-degenerate multiple root.  If the discriminantal variety is a hypersurface, we define
the {\em mixed discriminant} of the system~\eqref{eq:givensystem} to
be the unique (up to sign) irreducible polynomial $\Delta_{A_1,\ldots,A_n}$
with integer coefficients  in the unknowns $c_{i,a}$ which defines it.
Otherwise we say that the system is \emph{defective} and set 
$\Delta_{A_1,\ldots,A_n} = 1$.

In the non-defective case, we may 
identify $\Delta_{A_1,\ldots,A_n}$ with an
{\em $A$-discriminant} in the sense of Gel'fand, Kapranov and
Zelevinsky~\cite{GKZ}.
$A$ is the {\em Cayley matrix}~\eqref{eq:cayleymatrix} of $A_1,\ldots,A_n$.
This matrix has as columns the vectors in the lifted configurations
$e_i \times A_i  \in \ZZ^{2n}$  for $i=1,\ldots,n$.
The  relationship between $\Delta_{A_1,\ldots,A_n}$
and the $A$-discriminant will be made precise in
Section~\ref{sec:2}.

In Section~\ref{sec:3} we focus on the case $n=2$. 
 Here, the mixed discriminant
$\Delta_{A_1,A_2}$ 
 expresses the condition for two
plane curves $\{f_1 = 0\}$ and $\{f_2 = 0\}$ to be tangent at a common
smooth point. 
In Theorem~\ref{thm:formula1} we present a general formula  
for the bidegree $(\delta_1,\delta_2)$ of $\Delta_{A_1,A_2}$.
In nice special cases, to be described in Corollary~\ref{cor:bideg-bound}, that 
formula simplifies to
\begin{equation}
\label{eq:deltadelta}
\begin{aligned}
 \delta_1 &\,=\,  \area (Q_1+Q_2) - \area (Q_1) - \perim (Q_2), \\
  \delta_2 &\,=\,  \area (Q_1+Q_2) - \area (Q_2) - \perim (Q_1) ,
\end{aligned}
\end{equation}
where $Q_i$ is the convex hull of $A_i$ and $Q_1 + Q_2$ is their
Minkowski sum.  The {\em area} is normalized so that a primitive
triangle has area $1$. The {\em perimeter} of $Q_i$ is the cardinality
of $ \partial Q_i \cap \ZZ^2$.

The formula~\eqref{eq:deltadelta} applies in the classical case, 
where $f_1$ and $f_2$ are dense polynomials of
degree $d_1$ and $d_2$. Here, $\Delta_{A_1,A_2}$ is the classical
 \emph{tact invariant}~\cite[$\S$96]{Salmon} whose bidegree equals
  \begin{equation}
\label{eq:tactdegree}
(\delta_1,\delta_2) \,\,\, = \,\,\,\bigl(d_2^2+2d_1d_2-3d_2, d_1^2+2d_1d_2-3d_1\bigr).
\end{equation}
See Benoist~\cite{Benoist}  and Nie~\cite{JiawangDiscr} for
the analogous formula for $n$ dense polynomials in $n$ variables.
The right-hand side of~\eqref{eq:deltadelta} is always an upper bound for the bidegree of the mixed discriminant,
but in general the inequality is strict. Indeed,  consider two sparse polynomials
\begin{equation}
\label{eq:deltadelta22}
 f_1(x_1,x_2) \,  =\,  c_{10} + c_{11} x_1^{d_1} + c_{12} x_2^{d_1} \quad \hbox{and} \quad
f_2(x_1,x_2) \,  =\,  c_{20} + c_{21} x_1^{d_2} + c_{22} x_2^{d_2},
\end{equation}
  with $d_1$ and $d_2$ positive coprime integers, then the bidegree drops from~\eqref{eq:tactdegree} to
\begin{equation}
\label{eq:deltadelta2}  
  \bigl(d_2^2 + 2d_1d_2 - 3d_2\cdot\min\{d_1,d_2\}, \,d_1^2 + 2d_1d_2 - 3d_1\cdot\min\{d_1,d_2\} \bigr).
\end{equation}

In Section~\ref{sec:4} we prove that the degree of the mixed discriminant,
in the natural $\ZZ^n$-grading, is a 
 piecewise polynomial function in the coordinates of the points
in $A_1,A_2,\ldots,A_n$.

\begin{thm}\label{thm:degreeMixed}
  The degree of the mixed discriminant cycle is 
a piecewise linear function in
  the Pl\"ucker coordinates on the mixed Grassmannian
  $G(2n,\mathcal{I})$.  It is linear on the tropical matroid strata of
  $G(2n,\mathcal{I})$ determined by the configurations $A_1, \ldots,
  A_n$. The formula on each maximal stratum is unique modulo the
  linear forms on $\wedge^{2n} \RR^m$ that vanish on
  $G(2n,\mathcal{I})$.
  \end{thm}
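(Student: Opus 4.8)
The plan is to reduce the statement to the known tropical description of $A$-discriminant degrees, applied to the Cayley configuration, and then to identify the combinatorial parameter space with the mixed Grassmannian. First I would invoke the Cayley trick (from Section~\ref{sec:2}): the mixed discriminant $\Delta_{A_1,\dots,A_n}$ is the $A$-discriminant of the Cayley matrix $A$ whose columns are the lifted points $e_i\times A_i\in\ZZ^{2n}$. By the results of Gel'fand--Kapranov--Zelevinsky, the $A$-discriminantal variety is the closure of the image of the Horn--Kapranov uniformization, and its degree (more precisely, the degree of the discriminant cycle, counted with multiplicity when the variety fails to be a hypersurface) can be computed from the tropicalization of the dual variety. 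The tropical $A$-discriminant, by the theorem of Dickenstein--Feichtner--Sturmfels, is the Minkowski sum of the row space of $A$ and the tropicalization of the kernel of $A$ --- equivalently, it is described by the Bergman fan of the matroid of the Cayley matrix. The degree of the projective dual variety is then a tropical intersection number: it is read off as a sum of lattice-volume contributions over the maximal cones of this Bergman fan, weighted by the multiplicities of the tropical cycle.

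Next I would make precise the sense in which this degree depends \emph{linearly} on Plücker data. The point is that, once the \emph{matroid} of the Cayley matrix $A$ is fixed, the Bergman fan and all its multiplicities are fixed, so the only remaining dependence of the degree on the actual configurations $A_1,\dots,A_n$ is through the lattice lengths/volumes of the cones --- and these are linear functions of the coordinates of the columns of $A$, hence linear in the entries, hence expressible as linear combinations of the maximal minors (Plücker coordinates) of the $2n\times m$ matrix $A$. Changing the configurations within a fixed matroid stratum varies the Plücker vector continuously inside a single cone of the Bergman (tropical matroid) fan, and on that cone the degree formula is a fixed linear form in the Plücker coordinates. Crossing a wall corresponds to a change of matroid, i.e.\ to moving to a different stratum of the mixed Grassmannian $G(2n,\mathcal I)$, where a different linear form governs the degree; this is exactly the assertion of piecewise linearity. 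The uniqueness clause is immediate: two linear forms on $\wedge^{2n}\RR^m$ agreeing on all points of $G(2n,\mathcal I)$ differ by a linear form vanishing on the Grassmannian, since the affine span of $G(2n,\mathcal I)$ need not be all of $\wedge^{2n}\RR^m$ and the Plücker relations (together with the incidence/support constraints defining $\mathcal I$) cut out a proper linear subspace.

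The third ingredient is to record that the relevant index set $\mathcal I$ and the relevant strata are precisely the combinatorial data attached to the Cayley structure. Not every $2n$-subset of columns is admissible: the Cayley matrix has the block form forcing each admissible basis to contain a prescribed number of columns from each block $e_i\times A_i$ (this is the content of restricting to the \emph{mixed} Grassmannian rather than the full $G(2n,m)$), so the Plücker coordinates we use are indexed by $\mathcal I$, and the tropical matroid strata in the statement are the corank-constant strata of $G(2n,\mathcal I)$ realized by actual Cayley configurations. One must also treat the defective case uniformly: when the discriminantal variety is not a hypersurface, $\Delta=1$ and the degree in the $\ZZ^n$-grading is $0$; this occurs exactly over the strata where the tropical $A$-discriminant has codimension greater than one, and on those strata the relevant linear form is identically $0$, consistent with the piecewise linear picture. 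Finally, to promote the single-graded degree statement to the $\ZZ^n$-graded statement claimed in Section~\ref{sec:4}, I would apply the above separately to each of the $n$ natural multigrading components, each of which is again a tropical intersection number against the appropriate coordinate hyperplane class, hence again piecewise linear in the Plücker coordinates.

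The main obstacle I anticipate is the passage between ``degree of the discriminant cycle'' (with scheme-theoretic multiplicities) and the tropical intersection count in the defective/higher-codimension situation: one has to verify that the Dickenstein--Feichtner--Sturmfels formula, which computes the tropical dual variety as a balanced fan, still delivers the correct cycle-theoretic degree --- and in particular that the weights appearing there are the right ones --- when the dual variety is not a hypersurface. Establishing this compatibility (essentially a statement that tropicalization commutes with taking the appropriate intersection-theoretic degree, uniformly across all matroid strata) is where the real work lies; the linearity and uniqueness statements are then comparatively formal consequences of the fact that lattice volumes of cones are linear in the generators.
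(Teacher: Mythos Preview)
Your overall strategy---reduce to the $A$-discriminant via the Cayley trick, then invoke the tropical description of \cite{DFS}---is the same as the paper's. But there is a genuine gap in the heart of your linearity argument, and you have misidentified the main obstacle.

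You write that ``once the matroid of the Cayley matrix $A$ is fixed, the Bergman fan and all its multiplicities are fixed,'' and that ``crossing a wall corresponds to a change of matroid.'' This is not what the paper means by a \emph{tropical matroid stratum}. The degree formula coming from \cite[Theorem~5.2]{DFS} (equation~\eqref{eq:degree1} in the paper) has the form
\[
\deg_{x_i}(\Delta_A)\;=\;\sum_{\mathcal{J}\in\mathcal{C}_{i,\omega}}\bigl|\det(A^T,e_{J_1},\ldots,e_{J_{m-d-1}},e_i)\bigr|,
\]
where $\mathcal{C}_{i,\omega}$ is a set of maximal chains of flats determined by a ray-shooting procedure from a generic weight $\omega$. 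Two things prevent this from being linear once only the matroid is fixed: (i) the summands are absolute values of minors, not signed minors; and (ii) the index set $\mathcal{C}_{i,\omega}$ itself depends on the position of $\omega$ relative to the cones $\RR_{>0}\{e_{J_1},\ldots,e_{J_{m-d-1}},-e_i\}+\operatorname{rowspan}(A)$, and this can vary among configurations with the same matroid. The paper's Definition~\ref{def:strata} therefore imposes, in addition to $M^*(A)=M^*(A')$, that all the determinants $\det(M(A,\mathcal{J},i))$ have the same \emph{signs}; only then can the absolute values be replaced by fixed signs (giving linearity in Pl\"ucker coordinates) and---the nontrivial part---can one find matching generic weights $\omega,\omega'$ with $\mathcal{C}_{i,\omega}=\mathcal{C}_{i,\omega'}$. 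The paper establishes the latter via Gale duality and a Cramer's-rule analysis of an auxiliary hyperplane arrangement in $\RR^{m-d}$. Your proposal skips this entirely; the sentence ``lattice volumes of cones are linear in the generators'' does not address either the absolute values or the variability of the index set.

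Two smaller points. First, your description of the mixed Grassmannian is off: $G(2n,\mathcal{I})$ is not defined by restricting which Pl\"ucker coordinates are ``admissible'' (all $\binom{m}{2n}$ coordinates are used), but as the linear slice of $G(2n,m)$ consisting of subspaces that contain each $e_{I_j}$; the linear forms cutting it out are the entries of $\xi\wedge e_{I_j}$. Second, the obstacle you flag at the end---compatibility of tropical degree with the cycle-theoretic degree in the defective case---is not where the work lies; on defective strata $\Delta_A=1$ and the degree is zero, which is trivially linear. The real work is exactly the sign/index-set consistency argument you omitted.
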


  Here, the {\em cycle} refers to the mixed discriminant raised to a
  power that expresses the index in $\ZZ^n$ of the sublattice affinely
  spanned by $A_1 \cup \cdots \cup A_n$.  The mixed Grassmannian
  $G(2n,\mathcal{I})$ parameterizes all $2n$-dimensional subspaces of
  $\mathbb{R}^m$ that arise as row spaces of Cayley
  matrices~\eqref{eq:cayleymatrix} with $m = \sum_{i=1}^n |A_i| $
  columns, and $\mathcal{I}$ is the partition of $\{1, \dots, m \}$
  specified by the $n$ configurations. This Grassmannian is regarded
  as a subvariety in the exterior power $\wedge^{2n} \RR^m$, via the Pl\"ucker embedding
  by the maximal minors of the matrix~\eqref{eq:cayleymatrix}.  See
  Definition~\ref{def:mG} for details.  The mixed Grassmannian admits
  a combinatorial stratification into \emph{tropical matroid strata},
  and our assertion says that the degree of the mixed discriminant
  cycle is a polynomial on these
  strata. The proof of Theorem~\ref{thm:degreeMixed} is based on {\em
    tropical algebraic geometry}, and specifically on the
  combinatorial construction of the tropical discriminant~in~\cite{DFS}.

\section{Cayley Configurations} \label{sec:2}

Let $A_1,\ldots,A_n$ be configurations in $\ZZ^n$, defining
Laurent polynomials as in~\eqref{eq:efs}. We shall relate the mixed discriminant 
$\Delta_{A_1,\ldots,A_n}$ to the $A$-discriminant, where $A$ is the Cayley matrix 
\begin{equation}
\label{eq:cayleymatrix}
 A \quad = \quad \operatorname{ Cay}(A_1,\ldots,A_n) \,\, = \,\,
\begin{pmatrix}
{\bf 1} & {\bf 0} & \cdots & {\bf 0} \\
{\bf 0} & {\bf 1} & \cdots & {\bf 0} \\
\vdots & \vdots & \ddots & \vdots \\
{\bf 0} & {\bf 0} & \cdots & {\bf 1} \\
A_1 & A_2 & \cdots & A_n 
\end{pmatrix}.
\end{equation}
This matrix has $2n$ rows and $m = \sum_{i=1}^n |A_i |$ columns,
so ${\bf 0} = (0,\ldots,0)$ and ${\bf 1} = (1,\ldots,1)$
denote row vectors of appropriate lengths.
We introduce $n$ new variables $y_1,y_2,\ldots,y_n$ and encode
our system~\eqref{eq:givensystem} by one auxiliary polynomial
with support in $A$, via the \emph{Cayley trick}: 
$$ \phi(x,y) \quad = \quad y_1 f_1(x) + y_2 f_2(x) + \,\cdots\, + y_n f_n(x). $$
We denote by $\Delta_A$ the {\em $A$-discriminant} as defined in
\cite{GKZ}.  That is, $\Delta_A$
is the unique (up to sign) irreducible polynomial
with integer coefficients in the unknowns $c_{i,a}$ which vanishes
whenever the hypersurface $\{(x,y) \in (K^*)^{2n} \,: \, \phi(x,y) = 0
\} $ is not smooth. Equivalently, $\Delta_A$ is the defining equation of the 
dual variety $(X_A)^\vee$ when this variety is a hypersurface.  
Here, $X_A$ denotes the projective toric variety
in  $ \PP^{m-1}$  associated with the Cayley matrix $A$.  If  $(X_A)^\vee$ is not a hypersurface,
then no such unique polynomial exists. We then set
$\Delta_A = 1$ and refer to $A$ as a {\em defective} configuration.  
It is useful to keep track of the lattice index
\begin{equation*}
i(A) \,\,= \,\, i(A,\ZZ^{2n}) \,\, = \,\, \left[\ZZ^{2n}\,:\,\zspan{A}\right]\!,
\end{equation*}
where $\zspan{A}$ is the $\ZZ$-linear span of the columns of $A$.
The \emph{discriminant cycle} is the polynomial
\begin{equation*}
\tilde\Delta_A \ =\ \Delta_A^{i(A)}.
\end{equation*}
The same construction makes sense for the mixed discriminant 
  and it results in the \emph{mixed discriminant cycle}
  $\tilde \Delta_{A_1,\ldots,A_n}$.
  The exponents $i(A)$ will be compatible in the following theorem.

\begin{thm}\label{thm:Adiscr}
The mixed discriminant equals the
$A$-discriminant of the Cayley matrix:
$$\Delta_{A_1,\ldots,A_n} \,\,= \,\, \Delta_A.$$
\end{thm}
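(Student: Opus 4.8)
The plan is to extract the equality of these two hypersurfaces directly from the Jacobian criterion for the auxiliary polynomial $\phi(x,y)=\sum_i y_i f_i(x)$. First I would record the singularity equations: a point $(u,v)\in(K^*)^{2n}$ lies in the singular locus of $\{\phi=0\}$ exactly when $\partial\phi/\partial y_j(u,v)=f_j(u)=0$ for $j=1,\dots,n$; when $\partial\phi/\partial x_k(u,v)=\sum_{i=1}^n v_i\,\partial f_i/\partial x_k(u)=0$ for $k=1,\dots,n$; and when $\phi(u,v)=0$, the last being automatic from the first $n$. Since $v\in(K^*)^n$, these conditions say precisely that $u$ is a common zero of $f_1,\dots,f_n$ and that $\sum_i v_i\nabla_x f_i(u)=0$ is a linear dependence among the gradients all of whose coefficients are nonzero. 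By the definition of $\Delta_A$ recalled above, $\{\Delta_A=0\}$ is the Zariski closure, in coefficient space, of the locus of $c=(c_{i,a})$ admitting such a pair $(u,v)$ (and $\Delta_A=1$ when this closure is not a hypersurface). I emphasize that the coefficient spaces for the system and for $\Delta_A$ are literally the same $\prod_{i=1}^n K^{A_i}$, so comparing the two polynomials makes sense.

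Next I would prove $\{\Delta_{A_1,\dots,A_n}=0\}\subseteq\{\Delta_A=0\}$. If $u$ is a non-degenerate multiple root of the system with coefficients $c$, then $\nabla_x f_1(u),\dots,\nabla_x f_n(u)$ span a space of dimension exactly $n-1$, hence satisfy a dependence $\sum_i v_i\nabla_x f_i(u)=0$, unique up to scaling; and no $v_i$ vanishes, for otherwise the remaining $n-1$ gradients would be dependent, contradicting non-degeneracy. Thus $(u,v)\in(K^*)^{2n}$ is a singular point of $\{\phi=0\}$, so $c$ lies in the locus whose closure is $\{\Delta_A=0\}$; passing to closures gives the inclusion. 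In particular, if $A$ is defective, so that $\{\Delta_A=0\}$ has codimension at least two, then the discriminantal variety is likewise not a hypersurface, and $\Delta_{A_1,\dots,A_n}=1=\Delta_A$.

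For the reverse inclusion, assume $A$ is non-defective, so that $(X_A)^\vee$ is a hypersurface, and take $c$ generic on it. By the biduality theory of Gel'fand, Kapranov and Zelevinsky~\cite{GKZ} the generic tangent hyperplane touches $X_A$ along a linear space of dimension equal to the dual defect, here a single point, and this point lies in the dense torus orbit; so it is witnessed by a pair $(u,v)\in(K^*)^{2n}$ solving the Jacobian equations above. If the gradients $\nabla_x f_i(u)$ had rank at most $n-2$, there would be a $v'\in(K^*)^n$ with $(v_1',\dots,v_n')$ not proportional to $(v_1,\dots,v_n)$ and $\sum_i v_i'\nabla_x f_i(u)=0$; then $(u,v')$ would also solve the Jacobian equations, producing a second, distinct contact point $[(v_i'u^a)]\neq[(v_iu^a)]$, contradicting uniqueness. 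Hence the rank is exactly $n-1$, and for generic $c$ the point $u$ is isolated among the common zeros (when it is not, the configuration again turns out to be defective); so $u$ is a non-degenerate multiple root. Combined with the previous paragraph this yields $\{\Delta_A=0\}=\{\Delta_{A_1,\dots,A_n}=0\}$ as reduced hypersurfaces, whence $\Delta_{A_1,\dots,A_n}=\pm\Delta_A$, since an irreducible, content-one integer polynomial is determined up to sign by its zero set; raising both to the power $i(A)$ then gives the corresponding identity of discriminant cycles. (An alternative to the biduality step is to study the incidence variety of triples $(c,u,v)$ satisfying the Jacobian equations: it is irreducible, its defining equations are linear in $c$, and on a generic fibre of the projection to $(K^*)^{2n}$ one can choose $c$ making $\nabla_x f_1(u),\dots,\nabla_x f_{n-1}(u)$ independent, so the generic gradient-rank on it is $n-1$.)

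The step I expect to be the main obstacle is this reverse inclusion: showing that the generic solution of the Jacobian equations of $\phi$ really carries a \emph{non-degenerate} dependence, of gradient rank exactly $n-1$, together with an isolated root, rather than a positive-dimensional family of dependences or zeros. This is precisely where one must reconcile the closure in the definition of the discriminantal variety with the dual-variety definition of $\Delta_A$, guarantee that the contact point stays in the torus so that $v\in(K^*)^n$, and match the ``rank at most $n-2$'' behaviour with defectivity of the Cayley configuration.
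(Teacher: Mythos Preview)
Your proposal is correct and follows essentially the same route as the paper: both set up the Jacobian criterion for $\phi(x,y)$, obtain the easy inclusion from non-degenerate multiple roots to singular points of $\{\phi=0\}$, and then handle the reverse direction via the biduality fact that a generic hyperplane in $(X_A)^\vee$ is tangent at a single torus point, which forces the gradient dependency to be unique up to scaling. The only structural difference is cosmetic: you argue both inclusions of varieties and then invoke irreducibility to identify the defining polynomials, whereas the paper, after the easy inclusion, only shows $\Delta_A\neq 1\Rightarrow\Delta_{A_1,\dots,A_n}\neq 1$ and concludes by noting that the latter must then divide the irreducible $\Delta_A$; the underlying uniqueness-of-contact argument is identical (the paper perturbs $v$ by a dependency $t$ with $t_n=0$ rather than picking a second $v'\in(K^*)^n$, but this is the same idea). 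Your hand-wave about $u$ being isolated is also matched in the paper, which does not spell it out either; in both cases it is implicit in the uniqueness of the singular point $(u,v)$, since a curve of common zeros through $u$ would produce a positive-dimensional family of singular points of $\{\phi=0\}$ in $(K^*)^{2n}$.
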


This result is more subtle than it may seem at first glance. It implies that 
$(A_1,\ldots,A_n)$ is defective if and only if $A$ is defective.
The two discriminantal varieties can differ in that~case.

\begin{exmp} \label{ex:caveat}
Let $n = 2$ and consider the Cayley matrix
$$ A \,\,\, = \,\,\, \begin{pmatrix} {\bf 1} & {\bf 0} \\
  {\bf 0} & {\bf 1} \\
  A_1 & A_2 \end{pmatrix} \,\,\, = \,\,\,
  \begin{pmatrix}
   1 & 1 & 1 & 0 & 0 & 0 \\
    0 & 0 & 0 & 1 & 1 & 1 \\
   0 & 1 & 2 & 0 & 0 & 0 \\
   0 & 0 & 0 & 0 & 1 & 2 
   \end{pmatrix}
$$
The corresponding system~\eqref{eq:givensystem} consists of two
univariate quadrics in different variables:
$$ f_1(x_1) \, =\, c_{10} + c_{11} x_1 + c_{12} x_1^2 \,=\,0 \quad  \hbox{and} \quad
     f_2(x_2) \, =\, c_{20} + c_{21} x_2 + c_{22} x_2^2 \, = \,0. $$
This system cannot have a non-degenerate  multiple root,
for any choice of coefficients $c_{ij}$, so the
$(A_1,A_2)$-discriminantal variety  is empty.
On the other hand, the $A$-discriminantal variety is non-empty.
It has codimension two and is defined by
$   c_{11}^2  - 4 c_{10} c_{12} = 
        c_{21}^2  - 4 c_{20} c_{22} =  0 $. \hfill$\Diamond$
\end{exmp}

\begin{proof}[Proof of Theorem~\ref{thm:Adiscr}]
We may assume $i(A)=1$.
Let $u \in (K^*)^n$ be a non-degenerate multiple root
of  $f_1(x) = \cdots = f_n(x)= 0$. Our definition ensures the
existence of a  unique (up to scaling) vector $v \in (K^*)^n$
such that $\sum_{i=1}^n v_i \nabla_x f_i(u) $ is the zero vector.
The pair $(u,v) \in (K^*)^{2n}$ is a singular point of
the hypersurface defined by $\phi(x,y) = 0$. By projecting into the space of 
coefficients $c_{i,a}$, we see that
the $(A_1,\ldots,A_n)$-discriminantal variety is  contained in the
$A$-discriminantal variety. Example~\ref{ex:caveat} shows that this
containment can be strict.

We now claim that $\Delta_A \not=1$ implies $\Delta_{A_1,\ldots,A_n} \not=1 $.
This will establish the proposition because
 $\Delta_{A_1,\ldots,A_n}$ is a factor of $\Delta_A$,
 and $\Delta_A$ is  irreducible, so the two discriminants are equal.
 Each point $(u,v) \in (K^*)^{2n}$ defines a point on $X_A$.
If $\Delta_A \not= 1$, the dual variety $(X_A)^\vee$
  is a hypersurface in the dual projective space $(\PP^{m-1})^\vee$. Moreover, see e.g.\ \cite{Katz}, 
 a generic hyperplane in the dual variety is tangent to the
 toric variety $X_A$ at a single point.
  
Consider a generic point on the conormal variety of $X_A$ in
 $\PP^{m-1} \times (\PP^{m-1})^\vee$.  It is represented by a pair
 $\bigl( (u,v) , \,c \bigr)$, where $(u,v) \in (K^*)^{2n}$ and $c$ is
 the coefficient vector of a polynomial $\phi(x,y)$ such that $(u,v)$
 is the unique singular point on $\{\phi(x,y) = 0\}$.  The coefficient
 vector $c$ defines a point on the $(A_1,\ldots,A_n)$-discriminantal
 variety unless we can relabel such that the gradients of $f_1,\ldots ,f_{n-1}$ are
 linearly dependent at $u$. Assuming that this holds, we let
\[
\sum_{i=1}^{n-1} t_i \nabla_x f_i(u) \,\,= \,\, 0 
\]
be the dependency relation and set $t=(t_1,\ldots,t_{n-1},0)\neq
\mathbf{0}$. The point $\bigl( (t+u,v) , \,c \bigr)$ lies on the
conormal variety of $X_A$. This implies that the generic hyperplane
defined by $c$ is tangent to $X_A$ at two distinct points $(u,v)\neq
(t+u,v)$, which cannot happen. It follows that
$\Delta_{A_1,\ldots,A_n} \not=1$, as we wanted to show. This concludes
our proof.
\end{proof}

\begin{exmp} \label{ex:hyperdet}
Let $n=2$ and $A_1 = A_2 =
\{(0,0),(1,0), (0,1),(1,1)\}$, a unit square. Then
\begin{equation*}
   \begin{aligned}
     f_1(x_1,x_2) &  =  a_{00} + a_{10} x_1 + a_{01} x_2 + a_{11} x_1 x_2, \\
     f_2(x_1,x_2) &  =   b_{00} + b_{10} x_1 + b_{01} x_2 + b_{11} x_1 x_2.     
   \end{aligned}
  \end{equation*}
  The Cayley configuration $A$ is the standard $3$-dimensional cube. The
  $A$-discriminant  is known to be the
  {\em hyperdeterminant} of format $2 {\times} 2
  {\times} 2$, by~\cite[Chapter 14]{GKZ}, which equals
  \begin{equation*}
    \begin{array}{ccl}
  \Delta_{A_1,A_2}  & = & 
  a_{00}^2 b_{11}^2-2 a_{00} a_{01} b_{10} b_{11}-2a_{00} a_{10} b_{01} b_{11}
  -2 a_{00} a_{11} b_{00} b_{11} \\ & & + 4 a_{00} a_{11} b_{01} b_{10} 
  + a_{01}^2 b_{10}^2+4 a_{01} a_{10} b_{00} b_{11} - 
  2 a_{01} a_{10} b_{01} b_{10} \\ &  &
  - 2 a_{01}  a_{11} b_{00} b_{10} 
  + a_{10}^2 b_{01}^2- 2 a_{10} a_{11} b_{00} b_{01} + a_{11}^2 b_{00}^2. 
  \end{array}
 \end{equation*}
 Theorem~\ref{thm:Adiscr} tells us that this hyperdeterminant coincides with the
 mixed discriminant of $f_1$ and $f_2$.  Note that the bidegree equals
 $(\delta_1,\delta_2) = (2,2)$, and therefore~\eqref{eq:deltadelta} holds.  \hfill$\Diamond$
 \end{exmp}

We now shift gears and focus on defective configurations.  
We know from Theorem~\ref{thm:Adiscr} that $(A_1,\ldots,A_n)$ is defective if and only
if the associated Cayley configuration $A$ is defective.  While there has been some recent progress
on characterizing defectiveness  \cite{DFS, Esterov, matsui}, the problem of 
classifying defective configurations $A$ remains open, except in cases when the {\em codimension} of $A$ is at most four \cite{curran,DS} or when the toric variety 
$X_A$  is smooth or $\QQ$-factorial \cite{CDR, dirocco}.  
 Recall that $X_A$ is {\em smooth} if and only if, at each 
every vertex of the polytope $Q = {\rm conv}(A)$, the first elements of $A$
that lie on the incident edge directions form a basis for the lattice spanned by $A$. The variety $X_A$ is
$\QQ$-factorial when $Q$ is a \emph{simple} polytope, that is, when
every vertex of $Q$ lies in exactly ${\rm dim}(Q) $ facets.
Note that smooth implies $\QQ$-factorial.

We set $\dim(A) = \dim(Q)$, and we say that
$A$ is \emph{dense} if  $A = Q \cap \ZZ^d$. 
A subset $F \subset A$ is called a \emph{face} of $A$, denoted $F\prec A$, if $F$ is the
intersection of $A$ with a face of the polytope $Q$. We will denote by
$s_n$ the standard $n$-simplex and by $\sigma_n $ the configuration of its {\em vertices}.

When $A$ is the Cayley configuration of $A_1,\dots,A_n \subset \ZZ^n$,
the codimension of $A$ is $m-2n$. This number is
 usually rather large.  For instance, if all $n$
polytopes $Q_i = \conv(A_i) $ are full-dimensional in $ \RR^n$ then
$\codim(A) \geq n\cdot (n-1)$, and thus, for $n\geq 3$, we are
outside the range 
where defective configurations have been classified.  However, if $n=2$
and the configurations $A_1$ and $ A_2$ are full-dimensional we
can classify all defective configurations.

 \begin{prop}\label{prop:defective_two}
   Let $A_1, A_2 \subset \ZZ^2$ be full-dimensional configurations.
   Then, $(A_1,A_2)$ is defective if and only if, up to affine isomorphism, $A_1$ and  $A_2$ 
      are both translates of  $\,p\cdot \sigma_2$, for some positive
   integer $p$.
 \end{prop}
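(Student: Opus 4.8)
The plan is to use Theorem~\ref{thm:Adiscr} to transfer the question to the Cayley configuration $A = \Cay(A_1,A_2) \subset \ZZ^4$, which has $m = |A_1| + |A_2|$ columns and hence $\codim(A) = m - 4$. For the ``if'' direction, suppose $A_1$ and $A_2$ are translates of $p\cdot\sigma_2$; translating a configuration does not change the discriminant, so we may take $A_1 = A_2 = p\cdot\sigma_2 = \{(0,0),(p,0),(0,p)\}$. Then the Cayley matrix $A$ has six columns spanning a $4$-dimensional space, so $\codim(A) = 2$, and one computes directly that the two plane curves $\{f_1 = 0\}$ and $\{f_2 = 0\}$ are each (after the monomial substitution $x_i \mapsto x_i^{1/p}$, i.e.\ up to a finite cover) lines. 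Two generic lines meet transversally, and one checks that no choice of coefficients produces a non-degenerate multiple root in the sense of the definition — the gradient condition forces $f_1$ and $f_2$ to define the same line, but then every intersection point is degenerate. Hence the discriminantal variety is not a hypersurface and $(A_1,A_2)$ is defective. (Concretely, the toric variety $X_A$ here is a $\PP^1\times\PP^1$ in its Segre-like embedding, and its dual has codimension $2$, recovering defectiveness via Theorem~\ref{thm:Adiscr}.)

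For the ``only if'' direction, I would argue by contradiction: assume $(A_1,A_2)$ is defective but that the conclusion fails. By Theorem~\ref{thm:Adiscr}, $A = \Cay(A_1,A_2)$ is a defective $3$-dimensional configuration (it is $3$-dimensional precisely because $Q_1$ and $Q_2$ are full-dimensional in $\RR^2$). Now I invoke the known classification of defective configurations in low codimension, or more directly the classification of defective Cayley configurations of plane polytopes. Two routes are available. The first is to use the characterization of non-defective systems of two plane polynomials: the Cayley polytope $\Cay(Q_1,Q_2)$ is a $3$-polytope, and the known results on dual defect $3$-folds (Ein's classification, as used in \cite{CDR, dirocco}) force $X_A$ to be one of a short list; among Cayley embeddings of plane configurations the only dual-defective one is the Segre $\PP^1\times\PP^1$, which forces $Q_1$ and $Q_2$ to both be segments scaled to meet the lattice condition, i.e.\ translates of $p\cdot\sigma_1$ — but wait, that's the wrong dimension, so instead one gets that the mixed structure degenerates only when $Q_1, Q_2$ are homothetic simplices. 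The cleaner second route is combinatorial: a result of Esterov (and of Sturmfels, via the tropical discriminant of \cite{DFS}) states that $(A_1,\dots,A_n)$ is defective iff there is a subset $J \subseteq \{1,\dots,n\}$ with $\dim\bigl(\sum_{i \in J} Q_i\bigr) < |J|$ or a certain face condition holds; for $n = 2$ and both $Q_i$ full-dimensional the only surviving case is $\dim(Q_1 + Q_2) < 2$ — impossible — or the ``equal simplex'' degeneracy, which unwinds exactly to $A_1, A_2$ being translates of a common $p\cdot\sigma_2$.

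The main obstacle is pinning down precisely which classification theorem to cite and verifying it applies: the codimension of $A$ is $m - 4$, which is $\geq 2$ and is \emph{not} bounded above, so the codimension-$\leq 4$ classification of \cite{curran, DS} does \emph{not} cover all cases, and one must instead use either the $\QQ$-factorial/smooth results of \cite{CDR, dirocco} (which require $Q = \Cay(Q_1,Q_2)$ simple — true when $Q_1, Q_2$ are simplices, the relevant case, but not in general) or Esterov's combinatorial criterion for defectivity of the Cayley configuration. So the real content is a careful case analysis: given that $A$ is defective, I would enumerate the possible ``defective pairs'' directly by showing that defectivity forces, for \emph{every} generic $v \in (K^*)^2$, the curve $\{v_1 f_1 + v_2 f_2 = 0\}$ to be singular-free yet the tangency locus to drop in dimension; translating this back through the gradient condition $v_1 \nabla f_1(u) + v_2 \nabla f_2(u) = 0$ and using that a non-degenerate multiple root needs $\nabla f_1(u), \nabla f_2(u)$ each nonzero, one deduces that the Gauss maps of the two curves must coincide generically, which for plane curves forces both curves to be (covers of) lines sharing all tangent directions — i.e.\ $Q_1$ and $Q_2$ are homothetic, and the lattice index condition then forces them to be translates of $p\cdot\sigma_2$. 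Once the ``homothetic simplex'' shape is extracted, the remaining verification that no \emph{other} homothety ratio or shape works is a short direct computation with the explicit discriminant (as in Example~\ref{ex:hyperdet} for $p = 1$).
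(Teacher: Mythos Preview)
Your ``if'' direction is fine, but the ``only if'' direction is not a proof: you correctly identify that the low-codimension classifications of \cite{curran,DS} do not apply (since $\codim A = m-4$ is unbounded), and that the smooth/$\QQ$-factorial results of \cite{CDR,dirocco} require hypotheses not available here, but you do not actually carry out any of the alternative routes you sketch. The ``Esterov criterion'' you state is garbled --- the condition $\dim(\sum_{i\in J}Q_i)<|J|$ is the Bernstein condition for vanishing mixed volume, not for defectiveness --- and the Gauss-map sketch (``tangency locus drops in dimension, hence Gauss maps coincide generically'') is not an argument as written; defectiveness of $A$ means the dual variety of $X_A$ has codimension $\geq 2$, and you have not explained how this forces the Gauss maps of the two plane curves to agree.

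The paper's argument is both simpler and complete, and it avoids all of these classification results. The key observation (\cite[Proposition~3.1]{CDSCompositio2001} or \cite[Proposition~3.13]{Esterov}) is that if $A$ contains a full-dimensional \emph{non-defective} subconfiguration, then $A$ itself is non-defective. One then takes any three non-collinear points $u_1,u_2,u_3\in A_1$ and two points $v_1,v_2\in A_2$: the five-point subconfiguration is a $3$-dimensional circuit, and such a circuit is non-defective exactly when no four of the five points are coplanar, i.e.\ when $v_2-v_1$ is not parallel to any edge $u_j-u_i$. This can always be arranged unless every edge direction in $A_2$ is already an edge direction of the triangle on $u_1,u_2,u_3$ (and vice versa), forcing both $A_i$ to be vertex sets of triangles with pairwise parallel edges. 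An affine change of coordinates then gives $A_1=p\cdot\sigma_2$ and $A_2$ a translate of $\pm q\cdot\sigma_2$, and the explicit degree formulas \eqref{eq:deltadelta2} and Example~\ref{sparse-negative} show the discriminant is nontrivial unless $p=q$ with the same sign. You were looking for a structural classification theorem to cite; the actual mechanism is a five-point subconfiguration trick plus a short residual computation.
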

 
\begin{proof}
  Let $A=\cay(A_1,A_2)$. Both $A_1$ and $A_2$ appear as faces of $A$.
  In order to prove that $A$ is non-defective, it suffices to exhibit
  a $3$-dimensional non-defective subconfiguration (see
  \cite[Proposition~3.1]{CDSCompositio2001} or
  \cite[Proposition~3.13]{Esterov}).  Let $u_1,u_2,u_3$ be
  non-collinear points in $A_1$ and $v_1,v_2$ distinct points in
  $A_2$.  The subconfiguration $\{u_1,u_2,u_3,v_1,v_2\}$ of $A$ is
  $3$-dimensional and non-defective if and only if no four of the
  points lie in a hyperplane or, equivalently, if the vector $v_2 -
  v_1$ is not parallel to any of the vectors $u_j - u_i$, $j\not= i$.
  We can always find such subconfigurations unless $A_1$ and $A_2$ are
  the vertices of triangles with parallel edges. In the latter case,
  we can apply an affine isomorphism to get $A_1 = p\cdot \sigma_2$
  and $A_2$ a translate of $\pm q\cdot \sigma_2$, where $p$ and $q$ are positive
  integers.  The total degree of the mixed discriminant~equals
\begin{align*}
     \deg(\Delta_{p\cdot\sigma_2,q\cdot\sigma_2}) & \,\,=  \,\,
     (p^2 + q^2 + pq - 3 \min\{p,q\}^2)/{\rm gcd}(p,q)^2, \\
     \deg(\Delta_{p\cdot\sigma_2,-q\cdot\sigma_2}) & \,\,=\,\,   (p+q)^2/{\rm      gcd}(p,q)^2.
\end{align*}
The first formula follows from~\eqref{eq:deltadelta2} and it is
positive unless $p= q$.  The second formula will be derived in Example
\ref{sparse-negative}. It always gives a positive number. This concludes our
proof.
\end{proof}

Similar arguments can be used to study the case when one of the 
configurations is one-dimensional.  However, it is more instructive to
classify such defective configurations from the bidegree of the mixed discriminant.
This will be done in Section~\ref{sec:3}.

 \begin{cor}
   Let $A_1$ and $A_2$ be full-dimensional configurations in $\ZZ^2$.
   Then the mixed discriminantal variety of $(A_1,A_2)$ is either a
   hypersurface or empty.
 \end{cor}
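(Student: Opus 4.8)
The plan is to derive this from Proposition~\ref{prop:defective_two} together with a short direct calculation. Observe first that the only real content is the ``or empty'' alternative: by the definitions in the introduction, if $(A_1,A_2)$ is \emph{not} defective then its mixed discriminantal variety is by definition a hypersurface, so the statement amounts to the claim that in the defective case the variety is actually empty, rather than merely of codimension $\ge 2$.

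So suppose $(A_1,A_2)$ is defective with $A_1,A_2\subset\ZZ^2$ full-dimensional. By Proposition~\ref{prop:defective_two} there is an affine isomorphism of $\ZZ^2$ carrying $A_1$ and $A_2$ to translates of $p\cdot\sigma_2=\{(0,0),(p,0),(0,p)\}$ for one and the same positive integer $p$. First I would record that this normalization loses nothing: an affine lattice isomorphism induces a monomial automorphism of $(K^*)^2$, and replacing a support by a translate multiplies the corresponding $f_i$ by a Laurent monomial; on the torus both operations send common roots to common roots and multiply each gradient $\nabla_x f_i$ at a root by a nonzero scalar, hence preserve the notion of a non-degenerate multiple root and induce an isomorphism between the two discriminantal varieties. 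We may therefore assume
$$ f_1 \,=\, c_{10}+c_{11}x_1^p+c_{12}x_2^p, \qquad f_2 \,=\, c_{20}+c_{21}x_1^p+c_{22}x_2^p. $$

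Next I would analyze this normalized system by hand. At a point $u=(u_1,u_2)\in(K^*)^2$ the gradients are $\nabla_x f_1(u)=(p\,c_{11}u_1^{p-1},\,p\,c_{12}u_2^{p-1})$ and $\nabla_x f_2(u)=(p\,c_{21}u_1^{p-1},\,p\,c_{22}u_2^{p-1})$; since $u_1,u_2\ne 0$, these are linearly dependent precisely when the rows $(c_{11},c_{12})$ and $(c_{21},c_{22})$ are linearly dependent, a condition that does not involve $u$. If one of those rows vanishes the corresponding gradient is identically zero, violating non-degeneracy; otherwise the rows are proportional, say $(c_{21},c_{22})=\lambda\,(c_{11},c_{12})$ with $\lambda\in K^*$, and then $f_2-\lambda f_1=c_{20}-\lambda c_{10}$ is a constant. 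If that constant is nonzero the system has no common root at all; if it is zero then $f_2=\lambda f_1$, so the solution set of the system equals $\{f_1=0\}\cap(K^*)^2$, which is either empty or a pure one-dimensional curve and hence has no isolated point --- contradicting that a non-degenerate multiple root must be an isolated solution. Therefore no coefficient vector yields a non-degenerate multiple root, and the discriminantal variety of $(A_1,A_2)$ is empty.

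There is no serious obstacle here. The points that need care are the bookkeeping in the normalization step, namely checking that torus automorphisms and monomial rescalings of the $f_i$ genuinely carry non-degenerate multiple roots to non-degenerate multiple roots and discriminantal varieties to discriminantal varieties; and the small but decisive observation that for the triangular support $p\cdot\sigma_2$ the ``dependent gradients'' condition decouples from the point $u$ and collapses onto the coefficient matrix, which is what forces $f_1$ and $f_2$ to be proportional up to an additive constant.
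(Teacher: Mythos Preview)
Your argument is correct. The paper states this corollary without proof, treating it as an immediate consequence of Proposition~\ref{prop:defective_two}; your write-up supplies exactly the details one would expect, namely reducing via Proposition~\ref{prop:defective_two} to the normalized system on $p\cdot\sigma_2$ and then checking by hand that no coefficient vector can produce a non-degenerate multiple root. The bookkeeping you flag (that affine lattice isomorphisms and monomial rescalings preserve the discriminantal variety) is routine and handled correctly: at a common root the gradient of $x^{b_i}f_i(\psi(x))$ is $x^{b_i}(D\psi)^T\nabla f_i$, so linear dependence and individual nonvanishing transfer.

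One small variant worth noting: the paper, in Remark~\ref{rem:simple} for general $n$, prefers to absorb $p$ into the lattice (replace $\ZZ^2$ by $\zspan{pe_1,pe_2}$) so that the system becomes genuinely \emph{linear}, after which the same dichotomy (parallel distinct lines versus coincident lines) is immediate. Your direct computation with the $p$-th powers accomplishes the same thing without the lattice change; neither approach has any advantage over the other here.
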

 
\begin{rem} \label{rem:simple}
The same result holds in $n$ dimensions 
when the toric variety $X_A$  is smooth and
 $A_1,\ldots,A_n$ are full-dimensional configurations in $\ZZ^n$.
 Under these hypotheses,       $(A_1,\ldots, A_n)$ is defective if and only if
   each   $A_i$ is affinely equivalent to $p \cdot \sigma_n$, with $p \in \NN$. 
   In particular, the mixed discriminantal variety of $(A_1,\ldots,A_n)$ is either a
   hypersurface or empty.
    The  ``if'' direction is straightforward:
    we may assume $i(A)=1$ and $p=1$ by  
 replacing $\ZZ^n$ with the lattice spanned
 by $p e_1, \dots, p e_n$. Then, the system~\eqref{eq:givensystem}
 consists of linear equations, and it is clearly defective. 
 The ``only-if'' direction is derived from  results in \cite{dirocco}: $(A_1,\ldots, A_n)$ is
  defective if and only if the $(2n-1)$-dimensional 
  polytope $Q ={\rm conv}(A)$ is isomorphic to a Cayley
  polytope of at least $t +1 \ge n+1$ 
  configurations of dimension $ k < t$ that have the same normal fan.
    As  we already have a Cayley
  structure of $n$ configurations in dimension $n$, we deduce 
  $t=n$ and $k=n - 1$. Then,  we should have
  $Q\simeq  s_{n-1}\times s_{n} \simeq s_n \times s_{n-1}$. After an affine transformation,
all $n$ polytopes $Q_i$ are standard $n$-simplices and all $A_i$ are
 translates of $s_n$. This shows that $A$ has an
"inverted" Cayley structure of $n + 1$ copies of~$\sigma_{n-1}$.
 \end{rem}
 
 We expect Proposition~\ref{prop:defective_two} to hold
 in $n$ dimensions without the smoothness hypothesis in Remark~\ref{rem:simple}.
   Clearly, whenever the mixed volume of $Q_1,\ldots,Q_n$ is $1$, then there
 are no multiple roots and we have $\Delta_{A_1,\ldots,A_n} =
 1$.  The following result gives a necessary and sufficient condition
 for being in this situation: up to affine equivalence, this is just
 the linear case.
 
\begin{prop}
  If $A_1,\ldots,A_n$ are $n$-dimensional configurations in $\ZZ^n$
  then the mixed volume $ \MV(Q_1,\ldots,Q_n)$ is $ 1$ if and only if,
  up to affine isomorphism, $A_1=\cdots=A_n=\sigma_n$.
\end{prop}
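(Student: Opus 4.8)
The plan is to prove both directions of the equivalence, with the ``if'' direction being trivial: if each $A_i = \sigma_n$ then each $Q_i = s_n$ is the standard simplex, and $\MV(s_n,\ldots,s_n) = n! \vol(s_n) = 1$ by the normalization convention. Affine isomorphism preserves mixed volume (up to the lattice index, which is $1$ here since the $\sigma_n$ span $\ZZ^n$), so the same holds for any affine image. For the ``only if'' direction, I would argue by exploiting monotonicity and the irreducibility of mixed volume under the relevant operations. First I would recall the standard inequality $\MV(Q_1,\ldots,Q_n) \geq \MV(R_1,\ldots,R_n)$ whenever $R_i \subseteq Q_i$, together with the fact that $\MV(Q_1,\ldots,Q_n) \geq n! \vol(\conv(Q_1 \cup \cdots \cup Q_n))$ when all $Q_i$ are full-dimensional — more precisely, one has $\MV(Q_1,\dots,Q_n) \geq \MV(Q,\dots,Q) = n!\,\vol(Q)$ is false in general, so instead I would use the sharper bound coming from choosing a segment $E_i \subseteq Q_i$ in a generic direction: picking linearly independent primitive vectors $w_1,\dots,w_n$ with $w_i$ a difference of two points of $A_i$, the mixed volume of the segments $[0,w_i]$ equals $|\det(w_1,\dots,w_n)| \geq 1$, and this is $\leq \MV(Q_1,\ldots,Q_n)$.

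Assuming $\MV(Q_1,\ldots,Q_n) = 1$, monotonicity then forces $|\det(w_1,\dots,w_n)| = 1$ for every such choice of edge vectors, and moreover equality $\MV(Q_1,\ldots,Q_n) = \MV([0,w_1],\ldots,[0,w_n])$ must hold, which (by the equality conditions for monotonicity of mixed volumes, or by a direct Minkowski-sum volume computation) should force each $Q_i$ to be itself a segment in the direction $w_i$ — contradicting full-dimensionality unless $n=1$. Since the $Q_i$ are assumed $n$-dimensional, the resolution is that no such generic edge configuration can exist except in the degenerate boundary case; the correct statement is that $\MV(Q_1,\dots,Q_n)=1$ with all $Q_i$ full-dimensional forces, after an affine transformation in $GL_n(\ZZ)$, each $Q_i \supseteq s_n$ (up to translation) and $Q_1 + \cdots + Q_n$ to have normalized volume exactly $n^n$, which by the mixed-volume expansion of $\vol(Q_1 + \cdots + Q_n) = \sum \binom{n}{i_1,\dots} \MV(\dots)$ and positivity of each mixed-volume term pins down every mixed volume $\MV(Q_{j_1},\dots,Q_{j_n})$ to equal $1$ and hence each $Q_i = s_n$.

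Here is the cleaner route I would actually write up. Translate so $0 \in A_i$ for each $i$. Since each $Q_i$ is $n$-dimensional, I can pick $n$ points $0, a_{i,1}, \dots$ — in fact I claim $Q_i$ contains a unimodular simplex: choose vertices spanning, extract a $\ZZ$-basis. After a single $GL_n(\ZZ)$ change of coordinates I cannot simultaneously unimodularize all of them, so instead I use: $1 = \MV(Q_1,\dots,Q_n) \geq \MV(\Delta_1,\dots,\Delta_n)$ where $\Delta_i \subseteq Q_i$ is a unimodular simplex with one vertex at $0$, say $\Delta_i = \conv(0, b_{i,1},\dots,b_{i,n})$. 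Then $\MV(\Delta_1,\dots,\Delta_n)$ is a sum of absolute values of determinants $|\det(b_{1,j_1},\dots,b_{n,j_n})|$ over all choices, each a positive integer, so this sum is at least... well, at least $1$, and equals $1$ only if exactly one such determinant is nonzero with value $1$. A short combinatorial argument (if $\det(b_{1,j_1},\dots,b_{n,j_n}) = 0$ for all but one tuple, the $\Delta_i$ must all be ``aligned'' to a common flag) then shows each $\Delta_i$ is in fact a coordinate simplex after relabeling, forcing $Q_i = \Delta_i = \sigma_n$ by full-dimensionality and $A_i = Q_i \cap \ZZ^n \supseteq \sigma_n$ with no room for more points. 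The main obstacle — and the step I would be most careful about — is the combinatorial claim that ``$\MV$ of $n$ unimodular simplices through the origin equals $1$'' implies they can be simultaneously put into standard position; this is where one needs the equality case of monotonicity/mixed-volume positivity rather than just the inequality, and it is worth spelling out via the expansion $\vol(\Delta_1 + \cdots + \Delta_n) = \sum_{(j_1,\dots,j_n)} |\det(b_{1,j_1},\dots,b_{n,j_n})|$ being a sum of nonnegative integers summing to $n! \cdot 1 = n!$ once one accounts for all multi-index types, and checking this leaves no slack.
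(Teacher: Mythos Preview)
Your proposal is exploratory rather than a proof, and its ``cleaner route'' has two genuine gaps.

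First, the claim that each full-dimensional $Q_i$ contains a unimodular lattice simplex $\Delta_i$ is false in general. For instance, the tetrahedron $\conv\{(0,0,0),(1,1,0),(1,0,1),(0,1,1)\}\subset\ZZ^3$ has normalized volume $2$, contains no lattice points besides its vertices, and hence no unimodular subsimplex. You cannot ``extract a $\ZZ$-basis'' from vertices of an arbitrary lattice polytope. The paper bypasses this by first invoking the Aleksandrov--Fenchel inequality to deduce $\vol(Q_i)=1$ for every $i$; a lattice polytope of normalized volume $1$ \emph{is} a unimodular simplex, so no subsimplex is needed. You never use Aleksandrov--Fenchel, and without it there is no reason the $Q_i$ should be this small.

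Second, your formula expressing $\MV(\Delta_1,\ldots,\Delta_n)$ (or $\vol(\Delta_1+\cdots+\Delta_n)$) as $\sum_{(j_1,\ldots,j_n)}|\det(b_{1,j_1},\ldots,b_{n,j_n})|$ is incorrect: that is the zonotope formula for a Minkowski sum of \emph{segments}, not simplices. Already for $n=2$ with $\Delta_1=\Delta_2=\conv(0,e_1,e_2)$ the sum of determinants equals $2$, while $\MV(\Delta_1,\Delta_2)=1$. So the combinatorial endgame you sketch does not get off the ground.

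The paper's argument is quite different in structure: after Aleksandrov--Fenchel gives $\vol(Q_i)=1$, it uses monotonicity to find edges $l_i\subset Q_i$ with $\MV(l_1,\ldots,l_n)=1$, normalizes $l_1=e_n$, projects along $e_n$ to $\ZZ^{n-1}$, and applies induction on $n$. The induction step then pins down each $Q_i$ as $\conv(\sigma_{n-1},a_i)$ with $a_i=b_i\pm e_n$, and a final compatibility check forces all $Q_i$ to coincide. If you want to salvage your approach, the missing ingredient is precisely Aleksandrov--Fenchel; once you have $\vol(Q_i)=1$ the $Q_i$ are already unimodular simplices and it remains only to show they are all translates of each other, which still requires an argument (the paper does it via the projection/induction).
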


\begin{proof} We shall prove the ``only-if'' direction by induction on
  $n$. Suppose $ \MV(Q_1,Q_2,\ldots,Q_n)= 1$.  By the
  Aleksandrov-Fenchel inequality, we have $\vol(Q_i)=1$ for all $i$,
  where the volume form is normalized so that the standard $n$-simplex
  has volume 1. Since the mixed volume function is monotone in each
  coordinate, for any choice of edges $l_i$ in $Q_i$ we have
\[
0\,\leq \,\MV(l_1, l_2, \ldots, l_n)\,\leq\, 
\MV(l_1, Q_2, \ldots, Q_n)\,\leq \,\MV(Q_1, \ldots, Q_n) \,= \, 1.
\]
Since all  polytopes $Q_i$ are full-dimensional, we can pick $n$
linearly independent edges $l_1, \ldots, l_n$. Therefore
$\MV(l_1,  \ldots, l_n) > 0$ and
$\MV(l_1, l_2, \ldots, l_n)=\MV(l_1, Q_2, \ldots, Q_n) = 1$.  
In particular, the edge $l_1$ has length one.
After a change of coordinates we may
assume that $l_1 = e_n$, the $n$-th standard basis vector. Denote by $\pi$
 the projection of $\ZZ^n$ onto $\ZZ^n/\zspan{e_n}\simeq
 \ZZ^{n-1}$ and the corresponding map of $\RR$-vector spaces.  
 We then have $\,\MV(\pi(Q_2), \ldots, \pi(Q_n)) = 1$.
 
 By the induction hypothesis, we can transform the first $n-1$
 coordinates so that $\pi(A_2) = \cdots = \pi(A_n) = \sigma_{n-1}$.
 This means that $A_i\subset \sigma_{n-1}\times \zspan{e_n}$.  Now,
 let $a_i$ be a point in $A_i$ not lying in the coordinate hyperplane
 $x_n=0$.  Then $1\leq \vol(\conv(\sigma_{n-1},a_i)) \leq \vol(Q_i)
 =1$, and we conclude that $\,Q_i = \conv(\sigma_{n-1},a_i)$.  But,
 since $\vol(Q_i)=1$, it follows that $a_i= b_i \pm e_n$, for some
 $b_i\in \sigma_{n-1}$.  By repeating this process with an edge of
 $A_1$ containing the point $a_1$, we see that all $b_i$'s are equal
 and that the sign of $e_n$ in all $a_i$'s is the same. This shows
 that, after an affine isomorphism, we have $A_1=\cdots=
 A_n=\sigma_n$, yielding the result.
\end{proof}

\section{Two Curves in the Plane}\label{sec:3}

In this section we study the condition for two plane curves to be tangent.
This condition is the mixed discriminant in the case $n=2$.  Our
primary goal is to prove Theorem~\ref{thm:formula1}, which gives a
formula for the bidegree of the mixed discriminant cycle of two full-dimensional
planar configurations $A_1$ and $ A_2$. Remark
\ref{one-dimensional} addresses the degenerate case
when one of the $A_i$ is one-dimensional. Our main
tool is the connection between discriminants and principal
determinants.  In order to make this connection precise, and to define
all the terms appearing in~\eqref{eq:dega1}, we recall some basic
notation and facts.  We refer to~\cite{Esterov, GKZ} for further details.

Let $A\subset \ZZ^{d}$ and $Q$ the convex hull of $A$. 
As is customary in toric geometry, we assume that  $A$ lies
in a rational hyperplane that does not pass through the
origin.  This holds for Cayley
configurations~\eqref{eq:cayleymatrix}.  Given any subset $B\subset A$
we denote by $\zspan{B}$, respectively $\rspan{B}$, the linear span of
$B$ over $\ZZ$, respectively over $\RR$.    For any face $F\prec A$
we define the \emph{lattice~index}
\begin{equation*}
i(F,A) \,\,\, := \,\,\, \left[  \rspan{ F} \cap \ZZ^d : \zspan{F} \right].
\end{equation*}
  We set $i(A) = i(A,A) = [\ZZ^d: \zspan{A}]$. We consider
  the $A$-discriminant $\Delta_A$ and the 
  {\em principal $A$-determinant} $E_A$.  
  They are  defined in \cite{GKZ} under the assumption that $i(A)=1$.
  If $i(A)>1$ then   we change the ambient lattice from $\ZZ^d$
  to $\zspan{A}$, and we define the associated \emph{cycles}
 \[
\tilde E_A = E_A^{i(A)} \quad \hbox{and} \quad \tilde \Delta_A =
 \Delta_A^{i(A)}.
\]
The expressions on the right-hand sides are computed relative to the lattice $\zspan{A}$.
 
 \begin{rem}
 The principal $A$-determinant of \cite[Chapter~10]{GKZ}  
 is a polynomial $E_A$ in the variables $c_{\alpha}$, $\alpha\in A$. Its Newton polytope is the 
 \emph{secondary polytope} of $A$, and its degree is $(d+1)\vol(\conv(A))$,
 where $\vol = \vol_{\zspan{A}}$ is the normalized lattice volume for ${\zspan{A}}$.  
 We always have $\deg(\tilde E_A) = (d+1)\vol_{\ZZ^d}(\conv(A))$,
 where $\vol_{\ZZ^d}$ is the normalized lattice volume for $\ZZ^d$.
 \end{rem}

 We state the factorization formula of Gel'fand, Kapranov and
 Zelevinsky \cite[Theorem 1.2,~Chapter 10]{GKZ} for the principal
 $A$-determinant as in Esterov \cite[Proposition~3.10]{Esterov}:
 \begin{equation}\label{eq:FactorizationE_A}
\tilde E_A\,\,=\,\,\pm\tilde\Delta_A\cdot \prod_{F\prec A} \tilde\Delta_{F
}^{u(F,A)}.
\end{equation}
The product runs over all proper faces of $A$. The exponents
${u(F,A)}$ are computed as follows.  Let $\pi$ denote the projection to
$\rspan{A}/\rspan{F}$ and $\Omega$ the normalized volume form on
$\rspan{A}/\rspan{F}$. This form is normalized  with respect to
the lattice $\pi(\ZZ^d)$,   so that the fundamental
domain with respect to integer translations has volume
$(\dim(\rspan{A})- \dim(\rspan{F}))!$.
We set  $$\,u(F,A) \,:= \,\Omega\bigl(\conv(\pi(A))\, \sminus \conv(\pi(A\sminus F)) \bigr).$$

\begin{rem}
The positive integers $u(F,A)$ are denoted $c^{F,A}$ in \cite{Esterov}.  If $i(A)=1$ then
${u(F,A)}$  is the {\em subdiagram volume} associated with $F$, as in \cite[Theorem~3.8, Chapter~5]{GKZ}.
 \end{rem}
 
 We now specialize to the case of Cayley configurations $A =
 \cay(A_1,A_2)$, where $A_1,A_2\subset \ZZ^2$ are full-dimensional.
 Here, $A$ is a $3$-dimensional configuration  in the
 hyperplane $x_1 + x_2 =1$ in $\RR^4$.  Note that $i(A,\ZZ^4) =
 i(A_1\cup A_2,\ZZ^2)$.  The configurations $A_1$ and $A_2$ are
 facets of $A$.  
 
 We say that $F$ is a \emph{vertical} face of $A$ if $F\prec A$ but $F
 \not \prec A_i$, $i=1,2$.  The vertical facets of $A$ are either
 triangles  or two-dimensional Cayley configurations defined by edges $e\prec
 A_1$ and $f\prec A_2$.  This happens if $e$ and $f$ are parallel and
 have the same orientation, that is, if they have the same inward
 normal direction when viewed as edges in $Q_1$ and $Q_2$.  We call such edges
 \emph{strongly parallel} and denote the vertical facet they define by
 $V(e,f)$.

 Let $\Ecal_i$ denote the set of edges of $A_i$ and set
\begin{equation*}
\Pcal \,\,= \,\, \{(e,f) \in \Ecal_1 \times \Ecal_2 : e \hbox{\ is strongly parallel to\ } f\}.
\end{equation*}
We write $\ell(e)$ for the {\em normalized length} of an edge $e$ with
respect to the lattice $\ZZ^2$.  For $v\in A_1$ we define
\begin{equation}
\label{eq:mmv} \mmult(v) \,\,= \,\, \MV(Q_1,Q_2) -
\MV(\conv(A_1\sminus v),Q_2),
\end{equation}
 and similarly for $v \in
A_2$. This quantity is the \emph{mixed multiplicity} of $v$ in
$(A_1,A_2)$.
   
   \begin{thm}\label{thm:formula1}
Let $A_1$ and $A_2$ be full-dimensional configurations in $\ZZ^2$.  Then
\begin{eqnarray}\label{eq:dega1}
\delta_1 := \deg_{A_1}(\tilde\Delta_{A_1,A_2})  &=& \area(Q_2) + 2 \MV(Q_1,Q_2) \nonumber \\&& - \sum_{(e,f)\in \Pcal} \min\{u(e,A_1), u(f,A_2)\}\,  \ell(f) \,\, - \! \sum_{v\in \operatorname{Vert}A_1} \!\!\mmult(v).
\end{eqnarray}
\begin{eqnarray*}
\delta_2 := \deg_{A_2}(\tilde\Delta_{A_1,A_2})  &=& \area(Q_1) + 2 \MV(Q_1,Q_2) \nonumber\\&& - \sum_{(e,f)\in \Pcal} \min\{u(e,A_1), u(f,A_2)\}\,  \ell(e) \,\,- \! \sum_{v\in \operatorname{Vert}A_2} \!\!\mmult(v).
\end{eqnarray*}
\end{thm}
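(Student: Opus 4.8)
\textbf{Proof plan for Theorem~\ref{thm:formula1}.}
The strategy is to extract $\delta_1 = \deg_{A_1}(\tilde\Delta_{A_1,A_2})$ from the factorization formula~\eqref{eq:FactorizationE_A}, reading off the $A_1$-degree of each side and solving for the term $\deg_{A_1}(\tilde\Delta_A)$. Since $\Delta_{A_1,A_2} = \Delta_A$ by Theorem~\ref{thm:Adiscr}, this indeed computes what we want. First I would compute the left-hand side: by the remark on the principal $A$-determinant, $\deg(\tilde E_A) = (d+1)\,\vol_{\ZZ^4}(\conv(A)) = 4\,\vol(\conv(A))$ for $d = 3$, and $\vol(\conv(A))$ of a Cayley polytope $\cay(A_1,A_2)$ is the mixed volume $\MV(Q_1,Q_2)$ (the Cayley trick for volumes). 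More precisely I need the $\ZZ^2$-\emph{graded} degree in the $A_1$-variables only; tracking the bigrading, $\deg_{A_1}(\tilde E_A)$ should come out to $\area(Q_2) + 2\MV(Q_1,Q_2)$ — this is the ``leading'' part of the claimed formula, and verifying it is the first concrete computation. One clean way is to use that $E_A$ has the secondary polytope of $A$ as its Newton polytope, and to read $\deg_{A_1}$ off the appropriate face of that polytope, or alternatively to use the known bigraded degree of the principal determinant of a Cayley configuration.

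Next I would enumerate the proper faces $F \prec A$ that contribute to the product $\prod_{F\prec A}\tilde\Delta_F^{u(F,A)}$ with a nonzero $A_1$-degree. The faces split into three types: (i) faces contained in $A_1$ or in $A_2$; (ii) vertical facets, which are either triangles (giving $\Delta_F = 1$ since a triangle is defective — the corresponding system is linear, so these drop out) or the two-dimensional Cayley configurations $V(e,f)$ for strongly parallel edge pairs $(e,f)\in\Pcal$; (iii) lower-dimensional vertical faces, namely the vertical edges, which are segments joining a vertex of $A_1$ to a vertex of $A_2$ and whose discriminant is again trivial. For a face $F \prec A_1$ we have $\deg_{A_1}(\tilde\Delta_F) = \deg(\tilde\Delta_F)$, but $\Delta_F$ is a discriminant of a configuration inside $A_1$ alone, hence $1$ unless $F = A_1$ itself; the face $F = A_1$ contributes $u(A_1,A) = \mmult$-type data, and a direct computation of $u(A_1, A)$ via the subdiagram-volume definition should reproduce $\sum_{v\in\operatorname{Vert}A_1}\mmult(v)$. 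Faces $F \prec A_2$ contribute nothing to $\deg_{A_1}$ since their discriminants involve only $A_2$-variables. So the only surviving product terms are $\tilde\Delta_{V(e,f)}^{u(V(e,f),A)}$ for $(e,f)\in\Pcal$.

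The third step is to compute, for each strongly parallel pair $(e,f)$, both the $A_1$-degree of $\tilde\Delta_{V(e,f)}$ and the exponent $u(V(e,f),A)$. The configuration $V(e,f) = \cay(e,f)$ is the Cayley configuration of two parallel segments; its discriminant is the classical resultant-type discriminant of two binary forms with the relevant supports, and I expect $\deg_{A_1}(\tilde\Delta_{V(e,f)}) = \ell(f)$ after accounting for the lattice index. The exponent $u(V(e,f),A)$ is a subdiagram volume in the quotient $\rspan{A}/\rspan{V(e,f)}$, which is one-dimensional, so $u$ is a lattice length; I would compute it as $\min\{u(e,A_1), u(f,A_2)\}$ by relating the subdiagram data of $V(e,f)$ in $A$ to the one-dimensional subdiagram volumes $u(e,A_1)$ of the edge $e$ inside the polygon $Q_1$ (and symmetrically for $f$ in $Q_2$) — the $\min$ arises because the projected image of $A$ is a segment whose two ``overhangs'' past $\conv(\pi(A\setminus V(e,f)))$ have lengths governed separately by the $A_1$-side and the $A_2$-side, and only the shorter contributes. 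Combining: $\delta_1 = \deg_{A_1}(\tilde E_A) - \sum_{(e,f)\in\Pcal} u(V(e,f),A)\,\deg_{A_1}(\tilde\Delta_{V(e,f)}) - \deg_{A_1}(\tilde\Delta_{A_1}^{u(A_1,A)})$, which is exactly~\eqref{eq:dega1}. The formula for $\delta_2$ follows by the symmetry $A_1 \leftrightarrow A_2$, $e \leftrightarrow f$.

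\textbf{Main obstacle.} The delicate point is the identification of the exponents: proving $u(V(e,f),A) = \min\{u(e,A_1), u(f,A_2)\}$ and $u(A_1,A)$ contributing precisely $\sum_{v\in\operatorname{Vert}A_1}\mmult(v)$. These require carefully unwinding the subdiagram-volume definition $u(F,A) = \Omega(\conv(\pi(A)) \setminus \conv(\pi(A\setminus F)))$ in the right quotient lattice, keeping track of lattice indices (since $i(A)$ need not be $1$, which is why we work with cycles throughout), and matching the resulting polyhedral volume to the mixed-multiplicity and min-of-subdiagram-volumes expressions. I would handle $\mmult$ by noting that removing a vertex $v$ from $A_1$ shrinks $Q_1$, and the difference $\MV(Q_1,Q_2) - \MV(\conv(A_1\setminus v), Q_2)$ is exactly the slice of $\conv(\pi(A))$ cut off near the image of $v$; summing over $\operatorname{Vert}A_1$ telescopes to the full subdiagram volume $u(A_1,A)\cdot\deg_{A_1}\tilde\Delta_{A_1}$-term once one checks $\tilde\Delta_{A_1}$ has $A_1$-degree equal to $\deg(\tilde\Delta_{A_1})$ and sorts out the trivial-discriminant cases. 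Everything else — the volume of a Cayley polytope, which faces have trivial discriminant, the bigraded degree of $\tilde E_A$ — is standard once set up correctly.
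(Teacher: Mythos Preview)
Your overall strategy --- apply the factorization~\eqref{eq:FactorizationE_A} in the $A_1$-grading and identify each face contribution --- is exactly the paper's approach. However, three of the face computations are wrong, and the errors do not cancel in the way you suggest. First, the volume of the Cayley polytope is \emph{not} $\MV(Q_1,Q_2)$: one has $\vol(\conv(A)) = \area(Q_1)+\area(Q_2)+\MV(Q_1,Q_2)$, and the $A_1$-degree of $\tilde E_A$ is $3\,\area(Q_1)+\area(Q_2)+2\,\MV(Q_1,Q_2)$, not $\area(Q_2)+2\,\MV(Q_1,Q_2)$. Second, proper faces $F\prec A_1$ do \emph{not} all have trivial discriminant: for a vertex $v$ one has $\deg(\tilde\Delta_v)=1$, and edges $e\prec A_1$ generally have $\deg(\tilde\Delta_e)>0$. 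Third, $u(A_1,A)=1$ (the facet $A_1$ sits at lattice distance~$1$ from $A_2$), so the mixed multiplicities cannot arise from $u(A_1,A)$ as you claim.

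What actually happens (and this is the content of the paper's Lemmas~\ref{prop:horizontafacets}--\ref{mult_vertex}) is a cancellation: the extra $3\,\area(Q_1)$ in $\deg_{A_1}(\tilde E_A)$ is absorbed by the facet term $u(A_1,A)\,\deg(\tilde\Delta_{A_1}) = 3\,\area(Q_1) - \sum_{e\prec A_1} u(e,A_1)\deg(\tilde\Delta_e) - \sum_{v} u(v,A_1)$; the edge terms then cancel against the direct edge contributions because $u(e,A)=u(e,A_1)$; and the vertex contributions split as $u(v,A)=u(v,A_1)+\mmult(v)$, so the $u(v,A_1)$ pieces cancel and only $\sum_v \mmult(v)$ survives. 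So the mixed multiplicities enter through the \emph{vertex} exponents $u(v,A)$, not through the facet exponent. Your identification of $u(V(e,f),A)=\min\{u(e,A_1),u(f,A_2)\}$ and $\deg_{A_1}(\tilde\Delta_{V(e,f)})=\ell(f)$ is correct and is Lemma~\ref{prop:verticalfacets}.
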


Theorem~\ref{thm:formula1} is the main result in this section.  We
shall derive it from the following formula (which is immediate
from~\eqref{eq:FactorizationE_A}) for the bidegree of our mixed
discriminant:
\begin{equation}\label{eq:degMixedDiscriminant}
\bideg(\tilde{\Delta}_{A_1,A_2}) \,\,= \,\, \bideg(\tilde{E}_A)  \,- \, \sum_{k=1}^2\!\sum_{F\prec A_k}
\!\! u(F,A)\,\bideg(\tilde\Delta_{F}
) \,-  \, \!\!\! \sum_{\substack{F \prec A \\ 
\text{vertical
}}}
\!\!\!
u(F,A)\,\bideg (\tilde\Delta_{F}).
\end{equation}
Note the need for the {\em cycles} $\tilde{\Delta}_{A_1,A_2},
\tilde{E}_A$ and $\tilde \Delta_{F}$ in this formula.  We shall prove
Theorem~\ref{thm:formula1} by studying each term on the right-hand
side of~\eqref{eq:degMixedDiscriminant}, one dimension at a time.  A
series of lemmas facilitates the exposition.

\begin{lem}
  The bidegree $\bideg(E_A)$ of the principal $A$-determinantal cycle
  $\tilde{E}_A$ equals
\begin{equation}
  \label{eq:bideg_princ} (3\area (Q_1) +
  \area (Q_2) + 2 \MV(Q_1,Q_2) , \area (Q_1) + 3\area (Q_2) + 2 \MV(Q_1,Q_2)).
\end{equation}
\end{lem}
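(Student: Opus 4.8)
The plan is to read off $\bideg(\tilde E_A)$ directly from the fact (\cite[Chapter 10]{GKZ}) that the Newton polytope of the principal $A$-determinant $E_A$ is the secondary polytope of $A$, so that $E_A$ is homogeneous for the $\ZZ^4$-grading in which the variable $c_{i,a}$ carries the degree $e_i\times a$ given by its column of the Cayley matrix~\eqref{eq:cayleymatrix}. The corresponding degree vector $D_A\in\ZZ^4$ can be computed from any triangulation $T$ of $A$ as $D_A=\sum_a\vol_T(a)\,(e_{\iota(a)}\times a)$, where $\iota(a)$ is the block containing $a$ and $\vol_T(a)=\sum_{\sigma\in T,\,a\in\sigma}\vol(\sigma)$. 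Since the column of a point in block $j$ has a $1$ in row $j$ and a $0$ in the other of the first two rows, the first two coordinates of $D_A$ are exactly $\deg_{A_1}(E_A)$ and $\deg_{A_2}(E_A)$. Passing to the cycle $\tilde E_A=E_A^{i(A)}$ multiplies everything by $i(A)=i(A,\ZZ^4)$, which is precisely the factor relating $\vol_{\zspan A}$ to $\vol_{\ZZ^4}$; so I will work throughout with the $\ZZ^4$-normalized volume and denote by $\delta_j$ the resulting coordinate of $\bideg(\tilde E_A)$.

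Next I would interpret $\delta_1$ as an integral over $Q=\conv(A)$. A maximal simplex $\sigma$ of $T$ is $3$-dimensional, hence has four vertices, each a point of $A_1$ (with $x_1$-coordinate $1$) or of $A_2$ (with $x_1$-coordinate $0$); thus the number of vertices of $\sigma$ in block $1$ equals $4\,x_1(\bar\sigma)$, where $\bar\sigma$ is the barycenter. Summing over $T$ and using the identity $\sum_{\sigma}\vol(\sigma)\,\bar\sigma=3!\int_Q x\,d\mu$ (with $\mu$ the Lebesgue measure on the affine span of $Q$ normalized so that $\ZZ^4$ restricted there has covolume one — this identity is what makes the choice of $T$ disappear) gives $\delta_1=\sum_\sigma\vol(\sigma)\cdot\#(\sigma\cap A_1)=4\sum_\sigma\vol(\sigma)\,x_1(\bar\sigma)=4!\int_Q x_1\,d\mu$, and likewise $\delta_2=4!\int_Q x_2\,d\mu$.

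Then I would evaluate $\int_Q x_1\,d\mu$ through the Cayley structure. The projection $p\colon\RR^4\to\RR^3$, $(x_1,x_2,x_3,x_4)\mapsto(x_3,x_4,x_2)$, restricts to a lattice isomorphism from $\ZZ^4\cap\{x_1+x_2=0\}$ onto $\ZZ^3$ (check it on the basis $(1,-1,0,0),(0,0,1,0),(0,0,0,1)$), hence is $\mu$-preserving, and it carries $Q$ onto the Cayley polytope $\tilde Q=\{(y,t):t\in[0,1],\ y\in(1-t)Q_1+tQ_2\}$ with $x_1=1-t$. Therefore
\[
\delta_1=4!\int_0^1(1-t)\,\area_{\mathrm{eucl}}\bigl((1-t)Q_1+tQ_2\bigr)\,dt,
\]
and substituting $\area_{\mathrm{eucl}}((1-t)Q_1+tQ_2)=(1-t)^2a_1+2t(1-t)V+t^2a_2$ with $a_i=\tfrac12\area(Q_i)$ and $V=\tfrac12\MV(Q_1,Q_2)$, then integrating by the Beta integrals $\int_0^1(1-t)^3\,dt=\tfrac14$ and $\int_0^1t(1-t)^2\,dt=\int_0^1t^2(1-t)\,dt=\tfrac1{12}$, yields $\delta_1=24\bigl(\tfrac14a_1+\tfrac16V+\tfrac1{12}a_2\bigr)=6a_1+4V+2a_2=3\area(Q_1)+2\MV(Q_1,Q_2)+\area(Q_2)$. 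Exchanging the roles of $A_1$ and $A_2$ gives the value of $\delta_2$, which is exactly~\eqref{eq:bideg_princ}.

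I expect the main obstacle to be the bookkeeping of the three normalizations in play — Euclidean area in $\RR^2$, the $\ZZ^4$-normalized volume on the $3$-plane spanned by $Q$, and the index $i(A)$ hidden in the passage from $E_A$ to $\tilde E_A$ — together with verifying that $p$ is unimodular so that $\mu$ genuinely transports to ordinary Lebesgue measure on $\RR^3$. Once those are pinned down, the remaining ingredients (the secondary-polytope description of $\deg E_A$, the Cayley-trick identification $Q\cong\tilde Q$, and the elementary integral) are routine.
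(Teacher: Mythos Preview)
Your proof is correct and shares its starting point with the paper's: both recognize that the bidegree of $\tilde E_A$ is read off from a GKZ vector, i.e., from $\sum_\sigma \vol(\sigma)\cdot\bigl(\#(\sigma\cap A_1),\#(\sigma\cap A_2)\bigr)$ over any triangulation of $Q$. The two arguments diverge only in how this sum is evaluated. The paper's (terse) route is combinatorial: via the Cayley trick, the tetrahedra in a triangulation of $Q$ compatible with a mixed subdivision of $Q_1+Q_2$ fall into three types according to their vertex split, and those of type $(3,1)$, $(2,2)$, $(1,3)$ have total normalized volume $\area(Q_1)$, $\MV(Q_1,Q_2)$, $\area(Q_2)$ respectively; summing $(3,1)\area(Q_1)+(2,2)\MV(Q_1,Q_2)+(1,3)\area(Q_2)$ gives~\eqref{eq:bideg_princ} immediately. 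Your route is analytic: you rewrite the weighted sum as $4!\int_Q x_1\,d\mu$ via the barycenter identity, push it through the unimodular projection to the Cayley polytope over $[0,1]$, and integrate the Minkowski expansion of $\area_{\mathrm{eucl}}\bigl((1-t)Q_1+tQ_2\bigr)$. The paper's argument is shorter and more elementary once one accepts the mixed-subdivision interpretation of the Cayley trick; yours trades that combinatorial input for the normalization bookkeeping you identified, but has the virtue of generalizing verbatim to $n>2$ (the Beta integrals handle any number of blocks).
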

\begin{proof}
 By \cite{GKZ}, the total degree of $\tilde{E}_A$ is $4 \vol(Q)$.
From any triangulation of $A$ we can see 
$$
\vol(Q) \,\,=\,\, \area (Q_1) + \area (Q_2) +\MV(Q_1,Q_2).
$$
Examining the tetrahedra in a triangulation
reveals that the bidegree is given by~\eqref{eq:bideg_princ}.
\end{proof}

Any pyramid is a defective configuration; hence, the vertical facets of $Q$
that are triangles do not contribute to the right-hand side
of~\eqref{eq:degMixedDiscriminant} and can be safely ignored from now
on. In particular, we see that the only non-defective vertical facets
are the trapezoids $V(e,f)$ for $(e,f)\in \Pcal$. The following lemma
explains their contribution to~\eqref{eq:degMixedDiscriminant}.

\begin{lem}\label{prop:verticalfacets}
Let $V(e,f)$ be the vertical facet  of $A$ associated with $(e,f)\in \Pcal$.  Then
\begin{enumerate}
\item $\bideg (\tilde\Delta_{V(e,f)}) \,\,= \,\, (\ell(f) , \ell(e))$,
  \smallskip
\item $u(V(e,f),A) \,\,=\,\, \min\{u(e,A_1),u(f,A_2)\}.$
\end{enumerate}
\end{lem}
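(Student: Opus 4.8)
The plan is to prove the two assertions of Lemma~\ref{prop:verticalfacets} separately, exploiting the fact that $V(e,f) = \cay(e,f)$ is the Cayley configuration of two parallel segments, so it is a two-dimensional prism (trapezoid) whose combinatorics are completely explicit.

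\textbf{Part (1): the bidegree of $\tilde\Delta_{V(e,f)}$.} First I would normalize so that $e$ and $f$ are supported on a common primitive direction $w \in \ZZ^2$, with $\ell(e)=p$, $\ell(f)=q$. Concretely, after an affine change of coordinates carrying $w$ to the first basis vector, $e = \{0,1,\dots,p\}\times\{0\}$ and $f = \{0,1,\dots,q\}\times\{1\}$ (the second coordinate recording which of the two segments). Then $V(e,f)=\cay(e,f)$ is, up to lattice isomorphism, the configuration of lattice points of the trapezoid $\conv\{(0,0),(p,0),(0,1),(q,1)\}$ together with the segment-labels $y_1,y_2$; equivalently, by the Cayley trick the relevant polynomial is $\phi = y_1 f_e(x) + y_2 f_f(x)$ with $f_e$ a dense univariate polynomial of degree $p$ and $f_f$ a dense univariate polynomial of degree $q$. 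The mixed discriminant $\Delta_{e,f}$ in this case is classical: it is the condition for a degree-$p$ and a degree-$q$ univariate polynomial to have a common root, i.e. (up to the index factor) the resultant $\mathrm{Res}(f_e,f_f)$, whose bidegree in the coefficients of $(f_e,f_f)$ is $(q,p)=(\ell(f),\ell(e))$. To make this airtight I would invoke Theorem~\ref{thm:Adiscr} to identify $\Delta_{V(e,f)}=\Delta_{e,f}$, check the index bookkeeping so that the cycle $\tilde\Delta_{V(e,f)}$ really equals $\mathrm{Res}(f_e,f_f)$ (the $i(A)$-power is exactly what converts the sublattice-index discrepancy, mirroring the discussion after Theorem~\ref{thm:Adiscr}), and read off the bidegree from the Sylvester matrix. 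Alternatively, and perhaps more in the spirit of the surrounding section, one can compute $\bideg(\tilde\Delta_{V(e,f)})$ directly from the $A$-discriminant degree formula of \cite{GKZ} for the two-dimensional configuration $V(e,f)$, whose dual variety is a curve; the homogeneity degrees of that curve in the two $y$-blocks are $q$ and $p$.

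\textbf{Part (2): the exponent $u(V(e,f),A)$.} Here I would unwind the definition $u(F,A) = \Omega\bigl(\conv(\pi(A)) \smallsetminus \conv(\pi(A\smallsetminus F))\bigr)$ with $F = V(e,f)$ and $\pi$ the projection to $\rspan{A}/\rspan{F}$. Since $A$ is three-dimensional and $F$ is a vertical facet, the quotient is one-dimensional: $\pi$ collapses the plane spanned by the trapezoid $V(e,f)$, so $\pi(A)$ is a segment with one endpoint at $0=\pi(F)$. The subdiagram volume is then simply the lattice length of the part of $\pi(A)$ obtained after deleting the points of $A$ lying on $F$, i.e. the ``height'' of $A$ over the facet $V(e,f)$ measured after removing that facet. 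The point is that $A_1$ contributes points projecting to a multiple of the generator whose lattice distance equals $u(e,A_1)$ — the subdiagram volume of the edge $e$ inside $A_1$, which measures how far $A_1$ recedes from the line $\rspan{e}$ — and likewise $A_2$ contributes lattice distance $u(f,A_2)$ on the same ray (they lie on the \emph{same} side because $e$ and $f$ are \emph{strongly} parallel, which is exactly where the strong-parallel hypothesis is used). Therefore $\conv(\pi(A))$ is the segment from $0$ to the farther of the two, while $\conv(\pi(A\smallsetminus F))$ is the segment from $0$ to the nearer of the two after the facet points are stripped, and the set difference has lattice length $\min\{u(e,A_1),u(f,A_2)\}$, giving $u(V(e,f),A)=\min\{u(e,A_1),u(f,A_2)\}$.

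\textbf{Main obstacle.} The genuinely delicate point is Part (2): correctly identifying the one-dimensional projected picture and matching the quantity $u(e,A_1)$ (a subdiagram volume computed \emph{inside the plane} of $A_1$) with a lattice length \emph{in the quotient} $\rspan{A}/\rspan{F}$. One must be careful that the lattice $\pi(\ZZ^4)$ used to normalize $\Omega$ is the right one, that the Cayley structure does not introduce an extra factor, and that ``strongly parallel'' (same inward normal) — as opposed to merely parallel — is what guarantees the two contributions land on the same ray rather than on opposite rays (in the opposite-orientation case the facet would be a triangle/pyramid, already discarded). I would therefore spend most of the write-up on a careful normal-form computation for $V(e,f)$ and a clean description of $\pi(A)$, after which both (1) and (2) drop out; Part (1) is then essentially a citation to the resultant/Sylvester computation via Theorem~\ref{thm:Adiscr}.
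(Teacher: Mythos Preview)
Your approach is the paper's: Part~(1) via identifying $\tilde\Delta_{V(e,f)}$ with the resultant of two univariate polynomials of degrees $\ell(e)$ and $\ell(f)$, and Part~(2) via the one-dimensional projection $\pi$ modulo $\rspan{V(e,f)}$. Part~(1) is fine as stated.

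In Part~(2), however, your description of the two projected segments is garbled and does not actually yield the claimed answer. The image $\conv(\pi(A))$ is the segment $[0,L]$, where $L$ is the full lattice height of $A$ over the facet (the maximum of the heights of $A_1$ over $e$ and of $A_2$ over $f$), not merely $\max\{u(e,A_1),u(f,A_2)\}$. After removing the facet, $\conv(\pi(A\smallsetminus V(e,f)))$ is the segment $[\min\{u(e,A_1),u(f,A_2)\},\,L]$: the far endpoint $L$ does not move, but the near endpoint shifts from $0$ out to the closest surviving projected point, which lies at distance $u(e,A_1)$ if it comes from $A_1\smallsetminus e$ and at distance $u(f,A_2)$ if from $A_2\smallsetminus f$. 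The set difference is then $[0,\min\{u(e,A_1),u(f,A_2)\})$, of length $\min\{u(e,A_1),u(f,A_2)\}$ as desired. With the segments as you wrote them (both starting at $0$, ending at the farther and the nearer of the two $u$'s respectively) the difference would have length $\max-\min$, not $\min$. This is exactly the ``careful normal-form computation'' you flag as the main obstacle; once the endpoints are tracked correctly the argument coincides with the paper's.
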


\begin{proof}
  The configuration $V(e,f)$ is the Cayley lift of two one-dimensional
  configurations.  Its discriminantal cycle is the resultant of two univariate
  polynomials of degree $\ell(e)$ and $\ell(f)$, so (1) holds. In
  order to prove (2), we note that $u(V(e,f), Q)$ equals the
  normalized length of a segment in $\RR^3/\rspan{V(e,f)}$ starting at
  the origin and ending at the projection of a point in $A_1$ or
  $A_2$. This image is the closest point to the origin in the line
  generated by the projection of $Q$. Thus, the multiplicity
  $u(V(e,f), A)$ is the minimum of $u(e,A_1)$ and $u(f,A_2)$.
\end{proof}

\smallskip

We next study the horizontal facets of $A$ given by $A_1$ and $A_2$.   

\begin{lem}\label{prop:horizontafacets}  
The discriminant cycle of the plane curve defined by $A_i$ has total degree
\[
\deg(\tilde\Delta_{A_i}) \,\,= \,\, 
3 \area(Q_i) - \sum_{e\in \operatorname{Edges}A_i} u(e,A_i)
\deg(\tilde\Delta_{e}) \, - \!\!\!  \sum_{v \in
  \operatorname{Vert}A_i} u(v,A_i), 
\]
where
$\, u(v,A_i) = \area(Q_i) - \area(\conv(A_i \sminus v)) $.
\end{lem}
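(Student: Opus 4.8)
The plan is to apply the factorization formula~\eqref{eq:FactorizationE_A} to the three-dimensional configuration $A_i$ itself, viewed as a plane curve configuration sitting in its own affine hull $\RR^2$. Concretely, $E_{A_i}$ factors as $\tilde\Delta_{A_i}$ times the product of $\tilde\Delta_F^{u(F,A_i)}$ over all proper faces $F\prec A_i$. The proper faces of a planar configuration come in two types: the edges $e\in\operatorname{Edges}A_i$ and the vertices $v\in\operatorname{Vert}A_i$. Solving the factorization for $\deg(\tilde\Delta_{A_i})$ and taking total degrees gives
\[
\deg(\tilde\Delta_{A_i}) \,=\, \deg(\tilde E_{A_i}) - \sum_{e\in\operatorname{Edges}A_i} u(e,A_i)\deg(\tilde\Delta_e) - \sum_{v\in\operatorname{Vert}A_i} u(v,A_i)\deg(\tilde\Delta_v),
\]
so the claim reduces to three computations: the degree of $\tilde E_{A_i}$, the exponent $u(v,A_i)$ for a vertex, and the degree $\deg(\tilde\Delta_v)$.

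First I would compute $\deg(\tilde E_{A_i})$. By the Remark recalling~\cite[Chapter~10]{GKZ}, the degree of the principal $A$-determinant cycle is $(d+1)\vol_{\ZZ^d}(\conv(A))$; here $d=2$ and $\vol_{\ZZ^2}(Q_i)=\area(Q_i)$ in the paper's normalization, so $\deg(\tilde E_{A_i}) = 3\area(Q_i)$, matching the leading term. Second, for a vertex $v$, the discriminant $\Delta_v$ of a single point is $1$ (a single monomial is never singular), hence $\deg(\tilde\Delta_v)=0$, so those faces contribute nothing through their discriminant — but their exponent $u(v,A_i)$ still appears because the convention in~\eqref{eq:FactorizationE_A} includes a factor even when $\tilde\Delta_v$ is constant; rather, the cleaner route is to note that a $0$-dimensional face contributes $\tilde\Delta_v^{u(v,A_i)} = 1$ regardless, so strictly those terms drop out of the factorization. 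To recover the stated formula I instead read off $u(v,A_i)$ directly from its definition: $u(v,A_i) = \Omega(\conv(\pi(A_i))\smallsetminus\conv(\pi(A_i\smallsetminus v)))$ where $\pi$ projects to $\rspan{A_i}/\rspan{v}=\rspan{A_i}$ (since $v$ spans a line but the quotient by a point is still all of $\rspan{A_i}$ — here one must be careful that "face $\{v\}$" means the point, and $\rspan{F}=\rspan{v}$ is the line through $v$, which is nontrivial because $A_i$ is assumed not to pass through the origin). Then $\pi$ is projection onto $\RR^2/\rspan{v}\cong\RR$... this is the subtle point.

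The honest computation of $u(v,A_i)$: by the definition just below~\eqref{eq:FactorizationE_A}, for the face $F=\{v\}$ of the $2$-dimensional configuration $A_i$, the projection $\pi$ maps to $\rspan{A_i}/\rspan{v}$ which is one-dimensional, $\Omega$ is the normalized length form there, and $u(v,A_i) = \Omega(\conv(\pi(A_i))\smallsetminus\conv(\pi(A_i\smallsetminus v)))$. But the subdiagram-volume interpretation from~\cite[Theorem~3.8, Chapter~5]{GKZ} (valid when $i(A_i)=1$, else pass to $\zspan{A_i}$) identifies this, for a vertex of the polygon, with $\area(Q_i)-\area(\conv(A_i\smallsetminus v))$: removing the vertex $v$ from the point configuration removes a corner of the polygon, and the subdiagram volume of that corner equals the lattice area of the triangle (or polygon) sliced off, which is exactly $\area(Q_i)-\area(\conv(A_i\smallsetminus v))$. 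I would spell this out via the secondary polytope / regular subdivision picture: $u(v,A_i)$ is the height defect of the lower-envelope cell over $v$ in the subdivision induced by pulling $v$, which aggregates to the volume difference. The edges contribute via $\deg(\tilde\Delta_e) = \ell(e)-1$ if one works with the univariate discriminant — wait, but the cycle normalization gives $\deg(\tilde\Delta_e)$ equal to twice $\ell(e)$ minus $2$; since the lemma leaves $\deg(\tilde\Delta_e)$ symbolic, I do not need to evaluate it, only to confirm the indexing set of edges is right. The main obstacle is therefore the bookkeeping around lattice indices and the precise identification of the vertex exponent $u(v,A_i)$ with the area difference — making sure the normalizations in the definition of $\Omega$ (fundamental domain of volume $(\dim\rspan{A}-\dim\rspan{F})!$) are consistent with the "primitive triangle has area $1$" convention, and that the degenerate cases (where $A_i\smallsetminus v$ is still full-dimensional vs.\ drops dimension, the latter impossible for $v$ a vertex of a full-dimensional polygon with at least $3$ vertices) are handled. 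Everything else is a direct unwinding of~\eqref{eq:FactorizationE_A} and the degree formula for $\tilde E_A$.
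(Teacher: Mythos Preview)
Your approach --- apply the factorization~\eqref{eq:FactorizationE_A} directly to the planar configuration $A_i$ and take degrees --- is exactly what the paper does, and the identification $\deg(\tilde E_{A_i})=3\,\area(Q_i)$ is correct. But there is a genuine error in your treatment of the vertex contributions.

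You claim that $\Delta_v=1$ for a single-point configuration $\{v\}$, hence $\deg(\tilde\Delta_v)=0$, and then observe that the vertex factors ``strictly drop out of the factorization.'' This is wrong: in the GKZ convention one has $\Delta_{\{v\}}=c_v$, the coefficient variable itself, so $\deg(\tilde\Delta_v)=1$. One way to see this: $\deg(\tilde E_{\{v\}})=(\dim\{v\}+1)\cdot\vol(\mathrm{point})=1\cdot 1=1$, and a point has no proper faces, so $\tilde\Delta_{\{v\}}=\tilde E_{\{v\}}=c_v$. With $\deg(\tilde\Delta_v)=1$, the degree count from~\eqref{eq:FactorizationE_A} produces precisely the term $-\sum_{v} u(v,A_i)\cdot 1$ appearing in the lemma. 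Your attempt to re-insert that term by separately analyzing $u(v,A_i)$ has no logical force: if the vertex factors were truly trivial, no amount of knowing $u(v,A_i)$ would make them reappear in the degree formula, and your derivation would yield $3\,\area(Q_i)-\sum_e u(e,A_i)\deg(\tilde\Delta_e)$, which is not the claimed identity.

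On the side issue of $u(v,A_i)=\area(Q_i)-\area(\conv(A_i\sminus v))$: the paper simply says this ``is just its definition,'' meaning the subdiagram volume at a vertex of a polygon. Your projection discussion is tangled --- after homogenizing, $\rspan{A_i}/\rspan{v}$ is two-dimensional and $\pi$ restricted to the height-$1$ hyperplane is an affine isomorphism preserving normalized area --- but you do eventually reach the right identification via the GKZ subdiagram-volume interpretation.
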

\begin{proof}
  This is a special case of~\eqref{eq:degMixedDiscriminant} because
  $\deg(\tilde E_{A_i}) = 3\cdot \area(Q_i)$ and
  $\deg(\tilde\Delta_v)=1$ for any vertex $v\in A_i$.  The statement
  about $u(v,A_i)$ is just its definition.
\end{proof}

Next, we consider the edges of $A$.  The vertical edges are defective
since they consist of just two points.  Thus we need only examine the
edges of $A_1$ and $A_2$.

\begin{lem}\label{prop:edges} Let $e $ be an edge of $A_i$.  Then $u(e,A) = u(e,A_i)$.
\end{lem}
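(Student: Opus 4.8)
\textbf{Proof proposal for Lemma~\ref{prop:edges}.}
The claim is that for an edge $e$ of $A_i$, viewed as a (one-dimensional) face of the three-dimensional Cayley configuration $A = \cay(A_1,A_2)$, the multiplicity $u(e,A)$ coincides with the multiplicity $u(e,A_i)$ computed inside the planar configuration $A_i$. The plan is to unwind the definition $u(F,A) = \Omega(\conv(\pi(A)) \sminus \conv(\pi(A \sminus F)))$ in both ambient configurations and compare. Fix the edge $e \prec A_1$ (the case $e \prec A_2$ is symmetric). Let $\pi_A$ be the projection modulo $\rspan{e}$ inside $\rspan{A}$, and let $\pi_{A_1}$ be the projection modulo $\rspan{e}$ inside $\rspan{A_1}$; since $\rspan{e} \subset \rspan{A_1} \subset \rspan{A}$, the map $\pi_A$ factors through $\pi_{A_1}$.

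First I would observe that, because $A$ is a Cayley configuration lying in the hyperplane $x_1 + x_2 = 1$, the facet $A_1$ (where $x_2 = 0$, say, after the obvious normalization) is cut out by a coordinate-type functional, and the complementary points $A_2$ all lie strictly on one side. Consequently, in the quotient $\rspan{A}/\rspan{e}$, which is two-dimensional, the image $\pi_A(A_1)$ spans a line $L$ (the image of $\rspan{A_1}/\rspan{e}$), while $\pi_A(A_2)$ consists of points off that line. The key geometric point is that $\conv(\pi_A(A)) \sminus \conv(\pi_A(A \sminus e))$ is supported on $L$: the difference region measures, in the language of the proof of Lemma~\ref{prop:verticalfacets}, how far the projected polytope "sticks out" in the direction obtained by removing $e$, and removing the two endpoints of $e$ only changes the picture along $L$ because every point of $A_2$ (and every point of $A_1 \sminus e$) already projects into the half where $\conv(\pi_A(A\sminus e))$ lives. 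So the set difference is a one-dimensional slab inside $L$, and its $\Omega$-volume is computed by the induced lattice length on $L$.

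Next I would check that the induced normalization matches. The form $\Omega$ on $\rspan{A}/\rspan{e}$ is normalized by the lattice $\pi_A(\ZZ^4)$ so that the fundamental domain has volume $2! = 2$; restricting attention to the line $L = \pi_A(\rspan{A_1})$, the segment $\conv(\pi_A(A)) \sminus \conv(\pi_A(A\sminus e))$ has the same $\Omega$-measure as its normalized length with respect to the lattice $\pi_A(\rspan{A_1} \cap \ZZ^4) = \pi_{A_1}(\ZZ^2)$, because $A_1$ is a facet and the last coordinate supplies a complementary lattice direction of index one in $\pi_A(\ZZ^4)$ transverse to $L$. But that normalized length is exactly $u(e,A_1) = \Omega'(\conv(\pi_{A_1}(A_1)) \sminus \conv(\pi_{A_1}(A_1 \sminus e)))$, where $\Omega'$ is the one-dimensional volume form on $\rspan{A_1}/\rspan{e}$ normalized by $\pi_{A_1}(\ZZ^2)$. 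Assembling these identifications gives $u(e,A) = u(e,A_1) = u(e,A_i)$.

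The main obstacle I anticipate is the bookkeeping in the second paragraph: verifying cleanly that the set difference $\conv(\pi_A(A)) \sminus \conv(\pi_A(A\sminus e))$ is genuinely confined to the line $L$ and does not acquire a two-dimensional part from the interaction with $\pi_A(A_2)$. This is exactly where the Cayley structure is essential — the points of $A_2$ sit "above" $A_1$ in the extra coordinate and so their projections cannot enlarge the slab in the direction transverse to removing $e$. I would make this precise by noting that $\conv(A \sminus e) \supseteq \conv(A_2)$ together with $\conv(A_1 \sminus e)$, and that $\pi_A$ sends $\conv(A_2)$ onto a segment parallel to (indeed, a translate along the transverse direction of) $\pi_A(\conv(A_1))$, so the two convex hulls $\conv(\pi_A(A))$ and $\conv(\pi_A(A\sminus e))$ differ only in the portion of $L$ near the images of the endpoints of $e$. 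Once that is nailed down, the normalization comparison is routine, and the lemma follows.
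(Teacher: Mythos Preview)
Your geometric picture in the second paragraph is incorrect, and this breaks the argument. The set difference $\conv(\pi_A(A)) \sminus \conv(\pi_A(A\sminus e))$ is \emph{not} confined to the line $L$: it is a genuinely two-dimensional region. Concretely, in the quotient $\rspan{A}/\rspan{e}\simeq\RR^2$ the images $\pi_A(A_1)$ and $\pi_A(A_2)$ lie on two parallel lines (at heights $0$ and $1$ in the Cayley direction), so both $\conv(\pi_A(A))$ and $\conv(\pi_A(A\sminus e))$ are trapezoids sharing the same top edge $\pi_A(\conv(A_2))$ but with different bottom edges on $L$, namely $[0,M_1]$ versus $[m_2,M_1]$ (where $m_2$ is the lattice distance from $e$ to the nearest point of $A_1\sminus e$). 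Their set difference is the triangle with vertices $(0,0)$, $(m_2,0)$, and the leftmost vertex of $\pi_A(\conv(A_2))$ at height $1$. Since $\Omega$ is the normalized \emph{two}-dimensional volume form, a one-dimensional ``slab inside $L$'' would have $\Omega$-measure zero, so your computation as written yields $0$, not $u(e,A_i)$.

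The paper's proof is exactly the correction of this picture: it identifies the difference as a triangle of height~$1$ (in the Cayley direction transverse to $L$) and base $m_2 = u(e,A_1)$ on $L$, whose normalized area is therefore $1\cdot m_2 = u(e,A_1)$. Your intuition that the Cayley structure forces the ``change'' to happen near $L$ is correct, and your observation that the transverse lattice direction has index one is precisely what makes the height equal to~$1$; but you must track a two-dimensional area, not a one-dimensional length. Once you replace ``segment on $L$'' by ``triangle with base on $L$ and apex at height~$1$,'' your outline becomes the paper's proof.
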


\begin{proof}
 Consider  the projection  $\pi :Q_i \to \RR^2/\rspan{e}$. The image
 $\pi(Q_i)$ is a segment of length
  $M_1 = \max\{\ell([0,\pi(m)]) : m\in A_i\}$, while $\conv(A_i\sminus e)$
  projects to a segment of length $M_2 = \max\{\ell([0,\pi(m)]) : m\in
  (A_i \sminus e)\}.$ Thus $u(e,A_i) = M_1 - M_2$.  
Next,  consider  the projection  $Q \to \RR^3/\rspan{e}$.
  The images of $A$ and  $\conv(A \sminus e)$ under this projection are 
  trapezoids in  $\RR^3/\rspan{e}$.
        Their set-theoretic difference is a
  triangle of height $1$ and base $M_2-M_1$.
\end{proof}

\begin{lem}\label{mult_vertex} Let $v$ be a vertex of $A_i$. Then $\,u(v,A) = u(v,A_i) + \mmult(v)$.
\end{lem}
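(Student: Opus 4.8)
The plan is to reduce the subdiagram volume $u(v,A)$ to a difference of normalized volumes of $3$-dimensional Cayley polytopes, and then to apply the Cayley decomposition of the volume that underlies the formula for $\bideg(\tilde E_A)$ proved above. First I would unwind the definition of $u(v,A)$ at the vertex $v$. Since $v\in A_1$ we have $v_1+v_2=1$, so the line $\rspan v$ is transverse to the direction space $\{x_1+x_2=0\}$ of the hyperplane $\{x_1+x_2=1\}$ containing $Q=\conv(A)$. Hence the projection $\pi\colon\rspan A=\RR^4\to\RR^4/\rspan v$ restricts to an affine isomorphism of $\{x_1+x_2=1\}$ onto $\RR^4/\rspan v$, and this isomorphism carries the affine lattice $\ZZ^4\cap\{x_1+x_2=1\}$ bijectively onto $\pi(\ZZ^4)$; the key point here is that $\pi(v)=0$, so $\pi(e_1)=\pi(e_1-v)$ with $e_1-v\in\ZZ^4\cap\{x_1+x_2=0\}$, whence $\pi(\ZZ^4)=\pi(\ZZ^4\cap\{x_1+x_2=0\})=\pi(\ZZ^4\cap\{x_1+x_2=1\})$. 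Since $\pi$ is linear and $\conv(\pi(A\sminus v))\subset\conv(\pi(A))=\pi(Q)$, and since the exponent $3=\dim\rspan A-\dim\rspan v$ makes the normalization of $\Omega$ on $\RR^4/\rspan v$ agree with the normalized volume on the lattice $\ZZ^4\cap\{x_1+x_2=1\}$, we get
\[
u(v,A)\,=\,\Omega\bigl(\pi(Q)\sminus\pi(\conv(A\sminus v))\bigr)\,=\,\vol(Q)-\vol(\conv(A\sminus v)),
\]
the $3$-dimensional analogue of the identity for $u(v,A_i)$ in Lemma~\ref{prop:horizontafacets}.

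Next I would identify the polytope $\conv(A\sminus v)$. Deleting $v$ from $A=\cay(A_1,A_2)$ only removes a point of the $A_1$-block, so $A\sminus v=\cay(A_1\sminus v,A_2)$ and $\conv(A\sminus v)$ is the Cayley polytope $\conv(A_1\sminus v)*Q_2$. Now I would apply the Cayley volume identity
$\vol(P_1*P_2)=\area(P_1)+\area(P_2)+\MV(P_1,P_2)$, valid for any two lattice polytopes $P_1,P_2\subset\RR^2$ with the convention that $\area$ vanishes in dimension $<2$; for full-dimensional $P_i$ this is precisely the computation of $\vol(Q)$ in the proof of the formula for $\bideg(\tilde E_A)$, and the degenerate case (when $\conv(A_1\sminus v)$ is a segment) follows by the same slicing/triangulation argument together with the bilinearity of mixed area. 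Applying this to $Q=Q_1*Q_2$ and to $\conv(A\sminus v)=\conv(A_1\sminus v)*Q_2$ and subtracting, the $\area(Q_2)$-terms cancel and I obtain
\[
u(v,A)\,=\,\bigl(\area(Q_1)-\area(\conv(A_1\sminus v))\bigr)+\bigl(\MV(Q_1,Q_2)-\MV(\conv(A_1\sminus v),Q_2)\bigr).
\]
The first bracket equals $u(v,A_1)$ by Lemma~\ref{prop:horizontafacets} and the second equals $\mmult(v)$ by~\eqref{eq:mmv}, which is the assertion. The case $v\in A_2$ is symmetric.

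The only genuinely delicate step is the first one: matching the abstract normalization of $\Omega$ on the quotient $\RR^4/\rspan v$, whose ambient space has dimension one more than $\dim Q$, with the concrete normalized volume on the supporting hyperplane of $Q$. Once the transversality of $\rspan v$ to that hyperplane is used to see that $\pi$ restricts there to an affine \emph{lattice} isomorphism, the rest is bookkeeping with the Cayley structure, which behaves well even when $\conv(A_1\sminus v)$ degenerates to a segment.
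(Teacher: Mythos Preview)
Your proof is correct and follows essentially the same route as the paper's: reduce $u(v,A)$ to $\vol(Q)-\vol(\conv(A\sminus v))$, expand both via the Cayley volume identity $\vol(P_1*P_2)=\area(P_1)+\area(P_2)+\MV(P_1,P_2)$, and identify the two surviving differences as $u(v,A_i)$ and $\mmult(v)$. Your treatment is simply more explicit than the paper's about the normalization check in the first step---that the projection $\pi\colon\RR^4\to\RR^4/\rspan v$ restricts to an affine lattice isomorphism from $\ZZ^4\cap\{x_1+x_2=1\}$ onto $\pi(\ZZ^4)$---and about the degenerate case where $\conv(A_1\sminus v)$ is a segment; the paper absorbs both into a single sentence.
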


\begin{proof}  Suppose $v\in A_1$. The volume form
$\Omega$ is normalized with respect to the lattice
$\ZZ^3$.
The volume of our Cayley polytope $\cay(A_1,A_2)$ equals $\area(Q_1) + \area(Q_2) +
\MV(Q_1,Q_2)$, and the analogous formula holds for
${\rm conv}(A\sminus v)=\cay(A_1\sminus v, A_2)$. We conclude
\begin{align*}
  u(v,A) &=  \vol(\cay(A_1,A_2)) - \vol(\cay(A_1\sminus v,A_2)) \\
  &= \area(Q_1) - \area(\conv(A_1\sminus v) + \MV(Q_1,Q_2) - \MV(\conv(A_1\sminus v),Q_2))\\
  &= u(v,A_1) + \mmult(v).\\
\end{align*}
\vskip -1.3cm
\end{proof}

\begin{proof}[Proof of Theorem~\ref{thm:formula1}]
  By symmetry, it suffices to prove~\eqref{eq:dega1}.  We start with
  the $A_1$-degree of the principal $A$-determinantal cycle
  $\tilde{E}_A$ given in~\eqref{eq:bideg_princ}.  In light
  of~\eqref{eq:FactorizationE_A}, we subtract the $A_1$-degrees of the
  various discriminant cycles corresponding to all faces of
  $A$. Besides the contribution from $A_1$, having $u(A_1,A)=1$ and
  given by Lemma~\ref{prop:horizontafacets}, only the vertices and the
  vertical facets contribute. Using 
  Lemmas~\ref{prop:edges} and~\ref{mult_vertex}, we derive the  desired
  formula.
\end{proof}

At this point, the reader may find it an instructive exercise to
derive~\eqref{eq:tactdegree} and~\eqref{eq:deltadelta2} from
Theorem~\ref{thm:formula1}, and ditto for $(\delta_1,\delta_2) =
(2,2)$ in Example~\ref{ex:hyperdet}.  Here are two further examples.

\begin{exmp}\label{dense-negative}
Let $A_1 $ and $A_2$ be the dense triangles $(d_1 s_2) \cap \ZZ^2$ and
 $(-d_2 s_2) \cap \ZZ^2$.  Here, $i(A)=1$ and $\tilde\Delta_{A_1,A_2}=\Delta_{A_1,A_2}$.  
 We have $\MV(d_1 s_2,-d_2 s_2)=2d_1d_2$ and $\Pcal = \emptyset$.  Computation of the mixed areas
in~\eqref{eq:mmv} yields 
$\mmult(v) = d_2$ for vertices $v \in A_1$
and 
$\mmult(v) = d_1$ for vertices $v \in A_2$.
 We conclude
$$\bideg(\Delta_{A_1,A_2}) \,\,=\,\, 
\bigl(d_2^2 + 4d_1d_2 - 3d_2 , \, d_1^2 + 4d_1d_2 - 3d_1 \bigr).$$
\vspace{-6ex}

\hfill$\Diamond$
\end{exmp}

\begin{exmp}\label{sparse-negative}
Let $A_1 = d_1 \sigma_2 $  and 
$A_2 = - d_2 \sigma_2$. This is the
sparse version of Example~\ref{dense-negative}.
Now, $i(A) = g^2$, where $g = \gcd(d_1,d_2)$, and
$\tilde\Delta_{A_1,A_2}=\Delta_{A_1,A_2}^{g^2}$. 
We still have  $\MV(d_1 s_2,-d_2 s_2)=2d_1d_2$ and $\Pcal = \emptyset$, 
but $\mmult(v) = d_1d_2$ for all $v \in A$.~Hence
$$\bideg(\Delta_{A_1,A_2}) \,\,=\,\, 
\frac 1{g^2}\bigl(d_2^2 + d_1d_2,\,d_1^2 + d_1d_2\bigr) .
$$
\vspace{-6ex}

\hfill$\Diamond$
\end{exmp}

\begin{rem}\label{one-dimensional}
From \eqref{eq:degMixedDiscriminant} we may also derive 
formulas for the bidegree of the mixed discriminant in the 
case when one of the configurations, say $A_2$, is one-dimensional.
The main differences with the proof of Theorem~\ref{thm:formula1} 
is that now we must treat $A_2$ as an edge, rather than a facet, 
and that it is enough for an edge $e$ of $Q_1$ to be parallel 
to $Q_2$ in order to have a non-defective vertical facet of $Q$.  
Clearly, there are at most two possible edges of $Q_1$ parallel to $Q_2$.  
The $A_2$-degree of the mixed discriminant cycle now has a very simple expression:
\begin{equation}\label{eq:one-dimensional}
\delta_2 \,\,=\,\, \area(Q_1) - \sum_{e||Q_2} u(e,A_1) \ell(e),
\end{equation}
where the sum runs over all edges $e$ of $Q_1$ which are parallel to $Q_2$.  

In particular, if no edge of $Q_1$ is parallel to $Q_2$, then $\delta_2>0$ 
and $(A_1,A_2)$ is not defective.  If  only one edge $e$ of $Q_1$ is parallel to $Q_2$ then 
$\delta_2=0$ if and only if  $\area(Q_1) = u(e,A_1) \ell(e)$ 
but this happens only if there is a single point of $A_1$ 
not lying in the edge $e$.  This means that $A=\cay(A_1,A_2)$ 
is a pyramid and hence is defective.  Finally, if 
there are two edges $e_1$ and $e_2$ of $Q_1$ parallel to $Q_2$ then $\delta_2=0$ if and only if
$\,\area(Q_1) = u(e_1,A_1) \ell(e_1) + u(e_2,A_1) \ell(e_2)$. This can
only happen if all the points of $A_1$ lie either in $e_1$ or $e_2$.  In this case,
$A$ is the Cayley lift of three one-dimensional configurations, and it is defective as well.
\end{rem}

Our next goal is to provide a sharp geometric bound for the sum of the mixed
multiplicities. We start by providing a method to compute such
invariants by means of mixed subdivisions.

\begin{lem}\label{lm:MixedMult}
Let $A_1,A_2$ be full-dimensional in $\ZZ^2$ and $v \in A_1$.
Any mixed subdivision~of $Q^*=\conv
(A_1\sminus v) +Q_2$ extends to a mixed subdivision of
$Q=Q_1{+}Q_2$.  The mixed multiplicity $\mmult(v)$ is the sum of the
Euclidean areas of the mixed cells in the 
closure $D$~of~$Q \sminus Q^*$.
\end{lem}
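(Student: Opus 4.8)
The plan is to prove Lemma~\ref{lm:MixedMult} by unpacking the definition~\eqref{eq:mmv} of $\mmult(v)$ in terms of mixed volumes and relating both mixed volumes to areas of mixed cells in a common mixed subdivision. Recall that for two planar polytopes $P_1, P_2$ the mixed volume $\MV(P_1,P_2)$ equals the sum of the Euclidean areas of the genuinely mixed cells (those of the form $\sigma_1 + \sigma_2$ with $\dim\sigma_i = 1$) in any mixed subdivision of $P_1 + P_2$, using the normalization in which a primitive triangle has area $1$; here $\MV$ is twice the classical Euclidean mixed area, which matches the normalization of $\area$ used throughout the paper. So the strategy is: fix a mixed subdivision $\mathcal{S}^*$ of $Q^* = \conv(A_1 \sminus v) + Q_2$, extend it to a mixed subdivision $\mathcal{S}$ of $Q = Q_1 + Q_2$, and then write $\MV(Q_1,Q_2) - \MV(\conv(A_1\sminus v), Q_2)$ as the sum of areas of the mixed cells of $\mathcal{S}$ that lie in $D = \overline{Q \sminus Q^*}$.

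First I would justify the extension claim. Since $\conv(A_1 \sminus v) \subseteq Q_1$ and both share the summand $Q_2$, we have $Q^* \subseteq Q$, and $Q^*$ is a union of faces of an appropriate coherent subdivision; concretely, one can realize $\mathcal{S}^*$ as induced by a generic lifting function on $A_1 \sminus v$ and $A_2$, then extend the lifting to all of $A_1$ by assigning $v$ a sufficiently large (or generic) value so that the new cells involving $v$ attach along the boundary of $Q^*$ without disturbing $\mathcal{S}^*$. This produces a mixed subdivision $\mathcal{S}$ of $Q$ restricting to $\mathcal{S}^*$ on $Q^*$, so every mixed cell of $\mathcal{S}$ is either a mixed cell of $\mathcal{S}^*$ (inside $Q^*$) or a cell contained in $D$. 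Then, applying the mixed-cell formula for mixed volume to both $\mathcal{S}$ and $\mathcal{S}^*$, the mixed cells inside $Q^*$ cancel in the difference $\MV(Q_1,Q_2) - \MV(\conv(A_1\sminus v),Q_2)$, leaving exactly the sum of Euclidean areas of the mixed cells of $\mathcal{S}$ contained in $D$, which is the assertion.

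There is one subtlety to handle carefully: one must check that every mixed cell of $\mathcal{S}$ not already in $\mathcal{S}^*$ actually lies in $D$ and not straddling the boundary $\partial Q^* \cap \interior Q$. This follows because $\mathcal{S}$ restricts to $\mathcal{S}^*$ on $Q^*$, so any cell of $\mathcal{S}$ meeting the interior of $Q^*$ is a cell of $\mathcal{S}^*$; hence the remaining cells meet $Q^*$ only along its boundary and are contained in $D = \overline{Q \sminus Q^*}$. A second point worth a sentence is that the decomposition $D = \overline{Q \sminus Q^*}$ is indeed a union of cells of $\mathcal{S}$ (so ``mixed cells in $D$'' is unambiguous), which again is immediate once $\mathcal{S}$ refines the partition $Q = Q^* \cup D$.

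I expect the main obstacle to be the extension step: making precise that a mixed subdivision of $Q^*$ extends to one of $Q$ in a way that is compatible with the coherent/regular structure needed for the mixed-volume-via-mixed-cells identity. The cleanest route is to phrase everything in terms of regular (coherent) mixed subdivisions induced by lifting functions, where the extension is transparent — choose the lift of $v$ generic and large — and where the mixed-cell formula for $\MV$ is standard (see~\cite{GKZ}, or the theory of mixed subdivisions). If one wants the statement for an arbitrary, possibly non-regular, mixed subdivision of $Q^*$, a short additional argument (or an appeal to the fact that the area-sum of mixed cells is independent of the chosen mixed subdivision) closes the gap. Everything else is bookkeeping with the normalization constant relating $\area$, $\MV$, and Euclidean area.
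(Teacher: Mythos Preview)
Your argument is correct and conceptually clean: reduce to coherent mixed subdivisions, extend by lifting $v$ high, and read off $\mmult(v)$ as the difference of mixed-cell areas. The one soft spot you flag yourself---the lemma asserts the extension for \emph{any} mixed subdivision, while the lifting trick handles only coherent ones---does need to be addressed, but your suggested fix (invariance of the mixed-cell area sum) suffices for the second claim.

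The paper proceeds quite differently. Rather than invoking liftings, it constructs the subdivision of $D$ by hand: it identifies the set $\Ecal'_2(v)$ of edges of $A_2$ whose inner normals lie in the interior of the dual cone at $v$, and then tiles $D$ explicitly with triangles $\{v+b_j, a_{i-1}+b_j, a_i+b_j\}$ (unmixed) and parallelograms $\{v+b_{j-1}, v+b_j, a_i+b_{j-1}, a_i+b_j\}$ (mixed), checking compatibility along $\partial Q^*$. This concrete description costs more to write down, but it pays off immediately in Proposition~\ref{thm:mult_bound}, where the explicit parallelogram form of the mixed cells in $D$ is used to bound $\mmult(v)$ below by $\sum_{e\in\Ecal'_2(v)}\ell(e)$ and to obtain equality in the smooth case. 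Your abstract argument proves the lemma but does not yield that structural information, so if you intend to continue to Proposition~\ref{thm:mult_bound} you would still need to unpack what the cells in $D$ look like.
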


\begin{figure}[ht]
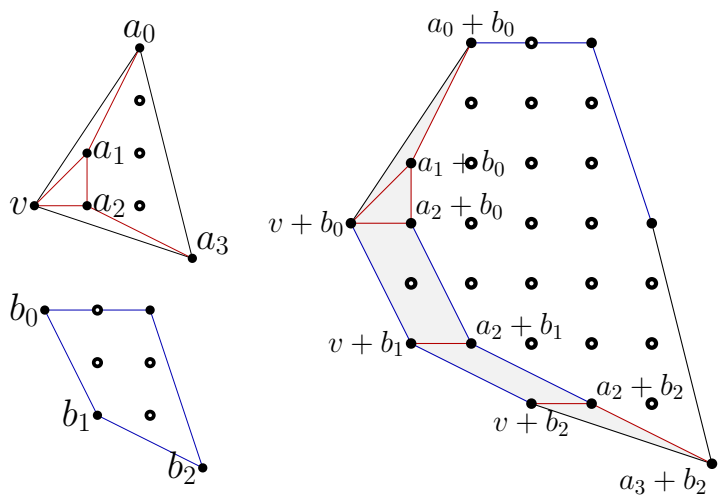

  \centering
\hfill
  \begin{minipage}{.2\linewidth}
  \includegraphics[scale=0.7]{mixedMultiplicitySubdivision.4}
\vspace{2ex}
\hspace{1ex}  \includegraphics[scale=0.7]{mixedMultiplicitySubdivision.5}
\end{minipage}
  \begin{minipage}{.5\linewidth}
    \includegraphics[scale=0.8]{mixedMultiplicitySubdivision.6}
  \end{minipage}
\caption{Geometric computation of the mixed multiplicity $\mmult(v)$
via a  suitable mixed subdivision of the two polygons.
 The region $D$ is shown in~grey.}
\label{fig:mixedMult}
\hfill\end{figure}

\begin{proof}
  Let $\Ecal'_2(v)$ denote the collection of edges in $A_2$ whose
  inner normal directions lie in the \emph{relative interior} of the
  dual cone to a vertex $v$ of $A_1$. Equivalently, $\Ecal'_2(v)$
  consists of those edges $[b, b']$ of $A_2$ such that $v+b$ and
  $v+b'$ are both vertices of $A_1+A_2$. See
  Figure~\ref{fig:mixedMult}.

First, assume $\Ecal'_2(v)=\emptyset$. Then, there exists a unique
$b\in A_2$ such that $v+b$ is a vertex of $Q$.  It follows that there
exist $a_0,\dots,a_r \in A_1$ such that $D$ is a union of triangles of the form
  \begin{equation}
\label{eq:nochange} D \,\,\,= \,\,\,\,\bigcup_{i=1}^r\,\conv(\{v+b,
  a_{i-1} + b, a_{i}+b\}),
\end{equation}
and $\{a_{i-1} + b, a_{i}+b\}$ are edges in the subdivision of $Q^*$.
Then, we can extend the subdivision of $Q^*$ by adding the triangles
in~\eqref{eq:nochange}.  This does not change the mixed areas and
$\mmult(v) =0$.

Suppose now that $\Ecal'_2(v)=\{f_1,\dots,f_s\}$, $s\geq 1$, with
indices in counterclockwise order. Let $b_0,\ldots,b_s$ be the
vertices of $A_2$ such that $f_i$ is the segment $[
b_{i-1},b_i]$.  The pairs $v+b_{i-1},v+b_i$, for $i=1,\ldots,s$,
define edges of $Q$ which lie in the boundary of $D$.  Let $a_0,a_r\in
A_1$ be the vertices of the edges of $Q_1$ adjacent to $v$.  We insert
$r-1$ points in $A_1$ to form a counterclockwise oriented sequence
$a_0,a_1,\dots,a_r $ of vertices of $\conv(A_1 \sminus v)$.  Then $a_0
+ b_0$ and $a_r+b_s$ are vertices of $Q^*$, and the boundary of $D$
consists of the $s$ segments $[v+b_{i-1},v+b_i]$, together with
segments of the form $[a_{i-1} + b_j, a_{i}+b_j]$ or $[a_i +
b_{j-1}, a_{i}+b_j]$. Figure~\ref{fig:mixedMult} depicts the case
$r=3, s=2$.  Given this data, we subdivide $D$ into mixed and unmixed
cells. The unmixed cells are triangles $\{v+b_j, a_{i+1}+b_j,
a_{i}+b_j\}$ coming from the edges $[a_i+b_j, a_{i+1}+b_j]$ of
$Q^*$. The mixed cells are parallelograms $\{v+b_j, v+b_{j+1},
a_i+b_{j+1}, a_i+b_j\}$ built from the edges $[a_i+b_j,
a_i+b_{j+1}]$ of $Q^*$. This subdivision is compatible with that of
$Q^*$.
\end{proof} 

We write $\Ecal'_1$ for the set of all edges of $A_1$ that are not
strongly parallel to an edge of $A_2$. The set
$\,\Ecal'_2$ is defined analogously.

\begin{prop}\label{thm:mult_bound} Let $A_1, A_2 \in \ZZ^2$ 
be two-dimensional configurations.  Then
  \begin{enumerate}
\item The sum of  the lengths of all edges in the set~$\Ecal'_j$ is a lower bound for 
the sum of the mixed multiplicities over all vertices of the other configuration $A_i$. In symbols,
\begin{equation*}
\sum_{v\in \operatorname{Vert}A_i} \mmult(v)\,\, \geq \,\, \sum_{e\in \Ecal'_j}
\ell(e)\, \quad {\rm for} \,\, j\not=i.
\end{equation*}
\item If $\,\Ecal'_j = \emptyset\,$ then ${\displaystyle{\sum_{v\in 
\operatorname{Vert}A_i} \mmult(v) = 0}}$.
\item If $i(A_1)=i(A_2)=1$ and the three toric surfaces corresponding to $A_1$, $A_2$ 
and $A_1+A_2$ are smooth
 then the bound in (1) is sharp.
\end{enumerate}
\end{prop}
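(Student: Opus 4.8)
The plan is to work directly from Lemma~\ref{lm:MixedMult}, which already provides a geometric recipe for each individual mixed multiplicity $\mmult(v)$ in terms of a mixed subdivision. For part (1), I would fix a vertex $v \in \operatorname{Vert} A_i$ and choose a common mixed subdivision of $Q_i + Q_j$ that simultaneously refines subdivisions of $\conv(A_i \sminus v) + Q_j$ for all vertices $v$; concretely, it is cleanest to take a \emph{coherent} mixed subdivision of $Q_i + Q_j$ and then, for each $v$, apply Lemma~\ref{lm:MixedMult} to read off $\mmult(v)$ as the sum of Euclidean areas of the mixed cells in the corner region $D_v$. The key combinatorial point is that the mixed cells appearing in $D_v$ are parallelograms of the form $\{v+b_k, v+b_{k+1}, a+b_{k+1}, a+b_k\}$ built from an edge $f = [b_k,b_{k+1}]$ of $A_j$ lying in $\Ecal'_j(v)$ together with a point $a \in A_i$; each such parallelogram has Euclidean area at least $\ell(f)$ (its base $\ell(f)$ times a lattice height $\geq 1$). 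Summing over the $b$-vertices in the fan at $v$ shows $\mmult(v) \geq \sum_{f \in \Ecal'_j(v)} \ell(f)$, and then summing over $v \in \operatorname{Vert} A_i$ gives the claimed inequality, once one checks that every edge $f$ of $A_j$ that is not strongly parallel to any edge of $A_i$ is counted in $\Ecal'_j(v)$ for exactly one vertex $v$ of $A_i$ (namely the vertex $v$ whose dual cone contains the inner normal of $f$ in its relative interior); edges of $A_j$ that \emph{are} strongly parallel to an edge of $A_i$ have their normal on a wall of the normal fan of $A_i$ and so lie in no $\Ecal'_j(v)$, which is exactly why we restrict to $\Ecal'_j$.

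For part (2), if $\Ecal'_j = \emptyset$ then every edge of $A_j$ is strongly parallel to an edge of $A_i$, so its inner normal lies on a wall of the normal fan of $Q_i$; consequently no edge of $A_j$ has its normal in the relative interior of a vertex cone of $Q_i$, i.e.\ $\Ecal'_j(v) = \emptyset$ for every vertex $v$ of $A_i$. The first case in the proof of Lemma~\ref{lm:MixedMult} then applies verbatim to each $v$: the region $D_v$ is a union of triangles \eqref{eq:nochange} contributing no mixed area, so $\mmult(v) = 0$ for all $v$, and the sum vanishes.

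For part (3), the task is to promote the area-$\geq \ell(f)$ estimate on each mixed parallelogram to an equality. The smoothness hypotheses on the toric surfaces of $A_1$, $A_2$, and $A_1 + A_2$ translate, via the standard dictionary, into primitivity statements: at each vertex of $Q_i$ the two incident edge directions form a lattice basis, and similarly for $Q_1 + Q_2$. I would argue that, under these conditions, for an edge $f = [b_k, b_{k+1}]$ of $A_j$ with primitive direction $w$ lying in the interior of the vertex cone at $v \in \operatorname{Vert}A_i$, and for $a$ the first lattice point of $A_i$ along the edge of $Q_i$ at $v$ adjacent to that cone, the parallelogram spanned by $w$ and $a - v$ is a fundamental lattice parallelogram — so its normalized area equals $\ell(f)$ exactly — and moreover the mixed subdivision can be chosen (using the $a_0, a_1, \dots, a_r$ insertion from Lemma~\ref{lm:MixedMult} with all $a_k$ taken to be successive lattice points) so that these are the only mixed cells. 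Summing then yields $\mmult(v) = \sum_{f \in \Ecal'_j(v)} \ell(f)$ and hence equality in (1). The main obstacle I anticipate is precisely this step: verifying that smoothness of all three surfaces forces the relevant parallelograms to be unimodular and that no extra mixed cells can sneak in — this requires carefully tracking how the normal fan of $Q_1 + Q_2$ (which smoothness controls) interacts with the two vertex fans, and handling the bookkeeping when several edges $f$ of $A_j$ fan out from the same vertex $v$ of $A_i$.
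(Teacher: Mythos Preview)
Your proposal is correct and follows essentially the same route as the paper: both arguments invoke Lemma~\ref{lm:MixedMult} to express each $\mmult(v)$ as the total area of the mixed parallelograms in the corner region $D$, bound each parallelogram's area below by $\ell(f)$, and then sum over vertices using that $\Ecal'_j$ is the disjoint union of the $\Ecal'_j(v)$; for (3) both use the smoothness hypotheses to force the relevant determinants to be unimodular. The only superfluous step in your outline is the call for a single coherent mixed subdivision simultaneously compatible with all vertices---the paper simply applies Lemma~\ref{lm:MixedMult} vertex by vertex, which avoids that bookkeeping entirely.
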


\begin{proof}
We keep the notation of the proof of 
 Lemma~\ref{lm:MixedMult}. Recall  that the set
$\Ecal'_2$ is the union of the sets $\Ecal_2'(v)$, where $v$ runs over
all vertices in $A_1$.
  By Lemma~\ref{lm:MixedMult}, the mixed multiplicity
  $\mmult(v)$ is the sum of the Euclidean areas of the mixed cells in
  $D$.  Each mixed cell is a parallelogram $\{v+b_{k-1},v+b_k,a_i + b_{k-1},
    a_{i}+b_k\}$, so its area is $   \ell([b_{k-1},b_{k}]) \cdot \ell([v,a_i])\cdot
  |{\rm det}(\tau_{k-1},\eta_i)|$, where $\tau_{k-1}$ and $\eta_i$ are
  primitive normal vectors to the edges $[b_{k}, b_{k+1}]$ and
  $[v,a_i]$. Thus $\mmult(v) \geq \sum_{e\in \Ecal'_2(v)}\ell(e)$.  
  Since   $\Ecal'_2$ is the disjoint union of the sets $\Ecal'_2(v)$,
  summing over all vertices $v$ of $A_1$ gives the desired lower bound.
  Part (2) also follows from Lemma~\ref{lm:MixedMult},
  as $\Ecal'_2=\emptyset$ implies that
  the subdivision of $D$ has  no  mixed~cells.

  It remains to prove (3). The assumption that $X_{A_1}$ is smooth
  implies that 
 the segment
  $[a_0,a_1]$ is an edge in $\conv(A_1 \sminus v)$.  Therefore, all
  mixed cells in $D$ are parallelograms with vertices
  $\{v+b_{k-1},v+b_k,a_i + b_{k-1}, a_{i}+b_k\}$ for $i=0,1$, $k=1,
  \ldots, s$.  This parallelogram has  Euclidean
  area $|{\rm det}(a_i -
  v, b_k - b_{k-1})|$, but, since $X_{A_1+ A_2}$ is smooth, we have:
$$|{\rm det}(a_i - v, b_k - b_{k-1})| = \ell([v,a_i])\cdot
\ell([b_{k-1}, b_k]) = 1 \cdot \ell([b_{k-1}, b_k])$$ Since
$\Ecal_2'(v)=\{[b_0, b_1], \ldots, [b_{s-1}, b_s]\}$, 
this equality and Lemma~\ref{lm:MixedMult} yield the result.
\end{proof}

\begin{rem}\label{rmk:smooth}
The equality  $\sum_{v\in \operatorname{Vert}A_i} \mmult(v) =\sum_{e\in \Ecal'_j}
\ell(e)$ in case  $i(A_1)=i(A_2)=1$ and the toric surfaces of $A_1$, $A_2$ and $A_1+A_2$ are smooth,
can be interpreted and proved with tools form toric geometry. Indeed, in this case,
let $X_{1}, X_{2}$ and $X$ be the associated toric varieties. Then, there are
birational maps $\pi_i\colon X\to X_i$ defined
by the common refinement of the associated normal fans, $i=1,2$.
The map $\pi_1$ is given by successive toric blow-ups of fixed
  points of $X_1$ corresponding to vertices $v$ of $A_1$ 
  for which $\Ecal'_2(v) \not= \emptyset$. The lenghts of the corresponding
  edges occur as the intersection product of the invariant (exceptional) 
  divisor associated to the edge with the ample line bundle associated to $A_2$, 
  pulled back to $X$.
\end{rem}

If $A_i$ is dense, then it is
immediate to check that $u(e,A_i) =1$ for all edges $e\prec A_i$. We
conclude with a geometric upper bound for the bidegree of the mixed
discriminant.

\begin{cor}\label{cor:bideg-bound}
Let $A_1$ and $A_2$ be full-dimensional configurations in $\ZZ^2$.  Then:
\begin{enumerate}
\item The bidegree satisfies
$\,\deg_{A_i}(\tilde\Delta_{A_1,A_2})  \leq 
\area(Q_j) + 2 \MV(Q_1,Q_2) - \perim(Q_j)\,, \ j\not=i$.
\item Equality holds in {\rm (1)} if  $i(A_1)=i(A_2)=1$ and  the three toric surfaces of $A_1$, $A_2$ and $A_1 + A_2$ are smooth.
\item Equality holds in {\rm (1)} if $Q_1,Q_2$ have the same normal fan and one of $A_1$ or $A_2$~is~dense.
\end{enumerate}
\end{cor}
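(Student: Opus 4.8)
The plan is to derive everything from the exact bidegree formula~\eqref{eq:dega1} of Theorem~\ref{thm:formula1}, by bounding term by term the two sums that are subtracted there. The one combinatorial fact needed at the outset is $\perim(Q_j)=\sum_{f\in\Ecal_j}\ell(f)$, which holds because each edge $f$ of $Q_j$ carries $\ell(f)+1$ lattice points and a polygon has equally many edges and vertices. Since for a convex polygon each inner normal direction is realized by at most one edge, the map $(e,f)\mapsto f$ is a bijection from $\Pcal$ onto $\Ecal_2\setminus\Ecal'_2$ (taking $i=1$, $j=2$), whence $\perim(Q_2)=\sum_{(e,f)\in\Pcal}\ell(f)+\sum_{e\in\Ecal'_2}\ell(e)$, where in the last sum the dummy $e$ now ranges over edges of $A_2$, matching the notation of Proposition~\ref{thm:mult_bound}(1).

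For part~(1), I would bound the two subtracted sums in~\eqref{eq:dega1} separately. The exponents $u(F,A)$ are positive integers, so $\min\{u(e,A_1),u(f,A_2)\}\geq1$ and therefore $\sum_{(e,f)\in\Pcal}\min\{u(e,A_1),u(f,A_2)\}\,\ell(f)\geq\sum_{(e,f)\in\Pcal}\ell(f)$. Proposition~\ref{thm:mult_bound}(1) gives $\sum_{v\in\operatorname{Vert}A_1}\mmult(v)\geq\sum_{e\in\Ecal'_2}\ell(e)$. Adding these two inequalities and using the perimeter identity, the total of what is subtracted in~\eqref{eq:dega1} is at least $\perim(Q_2)$, so $\delta_1\leq\area(Q_2)+2\MV(Q_1,Q_2)-\perim(Q_2)$; the bound on $\delta_2$ follows by the symmetric argument.

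For part~(2), I would show that both inequalities above become equalities. The mixed-multiplicity inequality is tight by Proposition~\ref{thm:mult_bound}(3), whose hypotheses are exactly those assumed here. For the strongly parallel terms, I claim that smoothness of $X_{A_1}$ alone forces $u(e,A_1)=1$ for every edge $e\prec A_1$: at a vertex $v$ of $e$, smoothness says that the first lattice points of $A_1$ along the two incident edge directions form a $\ZZ$-basis, so the first one along $e$ is $v+w$ with $w$ primitive and the first one $v+q$ along the other edge satisfies $|\det(w,q)|=1$; projecting modulo $\rspan{e}$ sends $v+q$ to a generator of $\pi(\ZZ^2)$, so $A_1\setminus e$ has a point at lattice distance $1$ from $\pi(e)$, and by the computation in the proof of Lemma~\ref{prop:edges} this is precisely $u(e,A_1)$. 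Hence $\min\{u(e,A_1),u(f,A_2)\}=1$ for all $(e,f)\in\Pcal$, and equality holds in~(1). For part~(3), a common normal fan forces $\Ecal'_1=\Ecal'_2=\emptyset$ and makes $\Pcal$ a bijective pairing of the edges of $Q_1$ and $Q_2$, so all mixed multiplicities vanish by Proposition~\ref{thm:mult_bound}(2); and since one of $A_1,A_2$ is dense, the remark preceding the corollary gives $u(e,A_i)=1$ for every edge, so again the minima are $1$ and~\eqref{eq:dega1} together with its companion collapse to the claimed equalities for both $\delta_1$ and $\delta_2$.

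The routine part is the bookkeeping with~\eqref{eq:dega1} and the two parts of Proposition~\ref{thm:mult_bound}; the only point requiring a little care is the claim in~(2) that smoothness of $X_{A_i}$ by itself pins $u(e,A_i)$ down to $1$ --- one must use that ``smooth'' here constrains the lattice points of $A_i$ near each vertex, not merely the normal fan --- and this is a short local computation.
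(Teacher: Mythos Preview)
Your proof is correct and follows essentially the same route as the paper's own argument: both start from~\eqref{eq:dega1}, split the subtracted terms into the $\Pcal$-contribution and the mixed-multiplicity contribution, bound these by $\sum_{(e,f)\in\Pcal}\ell(f)$ and $\sum_{e\in\Ecal'_2}\ell(e)$ respectively (the latter via Proposition~\ref{thm:mult_bound}(1)), and recognize the sum as $\perim(Q_2)$; parts~(2) and~(3) then identify when both bounds are tight using Proposition~\ref{thm:mult_bound}(3),(2) and the fact that smoothness or density forces the relevant $u(e,A_i)$ to equal~$1$. Your write-up is in fact more explicit than the paper's in justifying the perimeter decomposition and the smoothness $\Rightarrow u(e,A_i)=1$ step.
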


\begin{proof}
Assume $i=1$. Statement (1) follows from~\eqref{eq:dega1} and 
$$\sum_{(e,f)\in \Pcal} \min\{u(e,A_1), u(f,A_2)\}\,  \ell(f) + \sum_{v\in A_1} \mmult(v) 
\,\geq \,\sum_{f\in \Ecal_2 \sminus \Ecal'_2} \ell(f) + \sum_{f\in \Ecal'_2} \ell(f)
\,\, =\,\, \perim(A_2).$$
Statement (2) follows from  Theorem~\ref{thm:mult_bound} (3) and the fact that the smoothness condition implies $u(e,A_1) = u(f,A_2) = 1$ for all edges $e\prec A_1, \,f \prec A_2$.
Finally, if $Q_1$ and $Q_2$ have the same normal fan then
$\Ecal'_1 = \Ecal'_2 = \emptyset $, and, by
Theorem~\ref{thm:mult_bound} (2), all mixed multiplicities vanish.  
Density of $A_1$ or $A_2$ implies 
$ \min\{u(e,A_1), u(f,A_2)\} = 1$ for every pair
 $(e,f)\in \Pcal$.~Hence
$$\sum_{(e,f)\in \Pcal} \min\{u(e,A_1), u(f,A_2)\}\,  \ell(f) \,\, = \,\, {\rm perim}(Q_2).$$
\vspace{-6ex}

\end{proof}

Corollary~\ref{cor:bideg-bound} establishes
the degree formula (\ref{eq:deltadelta}).
We end this section with an example for which
 that formula 
  holds, even though conditions (2) and (3) do not.
It also shows that,
  unlike for resultants \cite[\S 6]{DFS}, the degree of the mixed discriminant
  can decrease when removing a single point from $A$ without altering the
  lattice or the convex hulls of the configurations.

\begin{exmp} \label{ex:nini}
  Consider the  dense configurations $A_1:=\{(0,0), (1,0), (1,1), (0,1)\}$
  and $A_2:=\{(0,0), (1,3), (-1,2),(0,1), (0,2)\}$.  
  The vertex $v=(0,0)$ of $A_2$ is a singular point.
     However, its mixed multiplicity equals
 $1$, so it agrees with the lattice length of the associated edge
  $[(0,0),(1,0)]$ in $A_1$.   Theorem~\ref{thm:formula1} implies that the
  bidegree of the mixed discriminant $\Delta_{A_1,A_2}$ equals 
  $(\delta_1, \delta_2) =(12,8)$.   If we remove the point $(0,1)$ from $A_2$, 
  the mixed multiplicity of $v$ is raised to $2$ 
 and  the bidegree of the mixed discriminant  decreases to $(12,7)$. 
 \hfill$\Diamond$
\end{exmp}
\medskip

\section{The Degree of the Mixed Discriminant is Piecewise Linear}
\label{sec:4}

Theorem~\ref{thm:formula1} implies that the bidegree of the mixed
discriminant of $A_1, A_2\subset \ZZ^2$ is piecewise linear in the
maximal minors of the Cayley matrix $A=\cay(A_1,A_2)$.  In this
section we prove Theorem~\ref{thm:degreeMixed} which extends the same
statement to arbitrary Cayley configurations, and we describe suitable regions
of linearity.  Theorem~\ref{thm:degreeMixed} allows us to obtain formulas for the
multidegree of the mixed discriminant by linear algebraic methods,
provided we are able to compute it in sufficiently many examples.
This may be done by using the {\em ray shooting
algorithm} of~\cite[Theorem 2.2]{DFS}, which has become a standard technique in tropical
geometry. We start by an example in dimension 3, which was computed using
Rinc\'on's software~\cite{Rincon}.

\begin{exmp}
    Consider the following three trinomials in three variables:
    \[ \begin{aligned}
        f&\,=\,  a_1\, x + a_2\, y^p + a_3 \, z^p,\\
        g&\,=\, b_1\,x^q + b_2\, y + b_3\, z^q,\\
h & \,=\, c_1 \,x^r + c_2\, y^r + c_3\, z.
      \end{aligned}
    \]
    Here $p,q$ and $r$ are arbitrary integers different from $1$.
    By the degree we mean the triple of integers that records the
    degrees of the mixed discriminant  cycle  $\Delta(f,g,h)$ in the unknowns
    $ (a_1,a_2,a_3)$, $(b_1,b_2,b_3)$, and $(c_1,c_2,c_3)$.
    It equals the following triple of piecewise polynomials:
\begin{equation*}
\begin{aligned}
\bigl(2pqr + q^2 r + q r^2 - q - r - 1  -p \min\{q,r\}, \\
   \, 2pqr + p^2r  + pr^2   - p - r - 1 - q \min\{r, p\}, \\
     \quad  2pqr + p^2q  + pq^2 - p - q - 1  - r \min\{p, q\} \bigr) \\
\end{aligned}
\end{equation*}
These three polynomials are linear functions in the
$6 \times 6$-minors of the $6 \times 9$ Cayley matrix $A$ that represents
$(f,g,h)$.
The space of all systems of three trinomials will be defined as a certain
 {\em mixed Grassmannian}.  The $6 \times 6$-minors represent its
  Pl\"ucker coordinates. \hfill$\Diamond$
\end{exmp}

Given $m \in \NN$ with $n \leq m$, consider a partition $ {\mathcal I}
= \{I_1, \ldots, I_n\}$ of the set $[m]=\{1, \ldots, m\}$.  Let
$G(d,m)$ denote the {\em affine cone over the Grassmannian} of
$d$-dimensional linear subspaces of $\RR^m$, given by its Pl\"ucker
embedding in $\wedge^d \RR^m$.  Thus $G(d,m)$ is the subvariety of
$\wedge^d \RR^m$ cut out by the quadratic Pl\"ucker relations. For
instance, for $d=2,m=4$, this is the hypersurface $G(2,4)$ in
$\wedge^2 \RR^4 \simeq \RR^6$ defined by the unique Pl\"ucker relation
$x_{12} x_{34} - x_{13} x_{24} + x_{14} x_{23} = 0$.

\begin{definition}\label{def:mG}
  The \emph{mixed Grassmannian} $G(d,\mathcal{I})$ associated to the
  partition $\mathcal I$ is defined as the linear subvariety of
  $G(d,m)$ consisting of all subspaces 
  that contain the vectors
  $e_{I_j}:=\sum_{i\in I_j} e_i$ for $j=1, \ldots, n$.  Here ``linear
  subvariety'' means that $G(d,\mathcal{I})$ is the intersection of
  $G(d,m)$ with a linear subspace of 
  the $\binom{m}{d}$-dimensional real vector space $\wedge^d \RR^m$.
\end{definition}

The condition that a subspace $\xi$ contains $e_{I_j}$ translates into
a system of $n(m-d)$ linearly independent linear forms in the Pl\"ucker
coordinates that vanish on $G(d,\mathcal{I})$.  These linear forms are
obtained as the coordinates of the exterior products $\,\xi \wedge
e_{I_j}\,$ for $j = 1,\ldots,n$.

We should stress one crucial point. As an abstract variety, the mixed
Grassmannian $G(d,\mathcal{I})$ is isomorphic to the ordinary
Grassmannian $G(d{-}n,m{-}n)$, where the isomorphism maps
 $\xi$ to its image modulo $\operatorname{
  span}(e_{I_1},\ldots, e_{I_n})$.  However, we always work with
the Pl\"ucker coordinates of the ambient Grassmannian $G(d,m)$ in
$\wedge^d \RR^m$. We do \textbf{not} consider the mixed Grassmannian 
$G(d{-}n,m{-}n)$ in its Pl\"ucker embedding in $\wedge^{d-n} \RR^{m-n}$.

Our mixed Grassmannian has a natural  decomposition
into finitely many strata whose definition  involves
\emph{oriented matroids}. On each stratum, 
the degree of the mixed discriminant cycle is a linear
function in the Pl\"ucker coordinates. 
In order to define tropical matroid strata and to prove Theorem
\ref{thm:degreeMixed}, it will be convenient to regard the mixed
discriminant as the $A$-discriminant $\Delta_A$ of the Cayley matrix
$A$. In fact, we shall consider $\Delta_A$ for arbitrary matrices $A
\in \mathbb{Z}^{d \times m}$ of rank $d$ such that $e_{[m]} =
(1,1,\ldots,1)$ is in the row span of $A$.  Then, $A$ represents a
point $\xi$ in the Grassmannian $G(d,\{[m]\}) $. This is the proper
subvariety of $G(d,m)$ consisting of all points whose subspace
contains $e_{[m]}$.

In what follows we assume some  familiarity
with matroid theory and tropical geometry.  We refer to 
\cite{DFS, FS2005} for details.
Given a $d \times m$-matrix $A$ of rank $d$ as above, we let $M^*(A)$ denote the
corresponding dual matroid on $[m]$. This matroid has rank $m-d$.
 A subset $I =\{i_1,\dots,i_r\} \subseteq [m]$ is {\em independent} 
 in $M^*(A)$ if and only if $e^*_{i_1},\dots,e^*_{i_r}$ are linearly 
 independent when restricted to $\ker(A)$, where $e_1^*,\dots,e_n^*$ 
 denotes the standard dual basis.  The  {\em flats} of the
    matroid $M^*(A)$ are the subsets
$J \subseteq [m]$ such that $[m] \sminus J$
is the support of a vector in $\ker(A)$. 

Let $\mathcal{T}({\ker}(A))$ denote the tropicalization of the kernel
of $A$. This tropical linear space is a balanced fan of dimension
$m-d$ in $\mathbb{R}^m$.  It is also known as the {\em Bergman fan} of
$M^*(A)$, and it admits various fan structures \cite{FS2005, Rincon}.
Ardila and Klivans \cite{AK} showed that the chains in the geometric
lattice of $M^*(A)$ endow the tropical linear space $
\mathcal{T}({\ker}(A))$ with the  structure of a simplicial fan.
The cones in this fan are $\operatorname{ span}(e_{J_1},e_{J_2},
\ldots, e_{J_r})$ where $\mathcal{J}=\{J_1 \subset J_2 \subset \cdots
\subset J_r\}$ runs over all chains of flats of $M^*(A)$. Such a cone
is maximal when $r = m-d-1$. Given any such maximal chain and any
index $i \in [m]$, we associate with them the following $m \times m$ matrix:
\begin{equation*}
M(A,\mathcal{J},i):=( A^T, e_{J_1}, e_{J_2}, \ldots, e_{J_{m-d-1}}, e_i ).
\end{equation*}
Its determinant is a linear expression in the Pl\"ucker coordinates 
of the row span $\xi$ of $A$:
$$\,{\det}(M(A,\mathcal{J},i))
\, \,= \,\, \xi \wedge e_{J_1} \wedge e_{J_2} \wedge \cdots \wedge
e_{J_{m-d-1}} \wedge e_i.$$

\begin{definition} \label{def:strata}
Let $A$ and $ A'$ be matrices representing  points $\xi$ and $\xi'$ in $G(d,\{[m]\})$.
These points belong to the same {\em tropical matroid stratum} if they
have the same dual matroid, i.e., 
$$ \quad M^*(A) \,= \,M^*( A'), $$ 
and, in addition,
for all $i \in [m]$ and all maximal chains of flats
$\mathcal{J}
$ in the above matroid, the determinants of the matrices
$M(A,\mathcal{J},i)$ and $M(A',\mathcal{J},i)$ have the same sign.
\end{definition}

\begin{rem}
Dickenstein {\it et al.}~\cite{DFS} gave the following formula
for the {\em tropical $A$-discriminant}:
\begin{equation}
\label{eq:tropDA}
 \mathcal{T}(\Delta_A) \quad = \quad  \mathcal{T}({\ker}(A)) \, + \, \operatorname{ rowspan}(A). 
 \end{equation}
This is a tropical cycle in $\mathbb{R}^m$, i.e.~a polyhedral fan 
that is balanced relative to the multiplicities associated to its
maximal cones. The dimension of $\mathcal{T}(\Delta_A)$ equals
 $m-1$ whenever $A$ is not defective.  It is clear from the
formula~\eqref{eq:tropDA} that $\mathcal{T}(\Delta_A)$ depends
only on the subspace $\xi = \operatorname{ rowspan}(A)$, so it is a function of $\xi \in G(d,\{[m]\})$.
The tropical matroid strata are the subsets of $G(d,\{[m]\})$ throughout which
the combinatorial type of~\eqref{eq:tropDA} does not change. 
\end{rem}

\begin{exmp}\label{exmp:strata}
We illustrate the definition of the tropical matroid strata by revisiting the formulas in
(\ref{eq:deltadelta2}) and Example~\ref{sparse-negative}.   The  Cayley matrix 
of the two sparse triangles equals
$$
 A \ =\  \operatorname{ Cay}(A_1,A_2) \,\, = \,\,
\begin{pmatrix}
1&1&1&0&0&0\\
0&0&0&1&1&1\\
0&d_1&0&0&d_2&0\\
0&0&d_1&0&0&d_2\\
\end{pmatrix}.
$$
The matroid $M^*(A)$ has rank $2$, so every maximal chain of flats in $M^*(A)$ consists of 
a single rank $1$ flat.  These flats are $J_1 = \{1,4\}$, $\,J_2 = \{2,5\}$, and $\, J_3 = \{3,6\}$.
The $6 \times 6$-determinants ${\det}(M(A,\mathcal{J},i))$  obtained by augmenting $A$ with one vector 
$e_{J_k}$ and one unit vector $e_i$ are $0$,  $\pm d_1(d_1-d_2)$, or $\pm d_2(d_1-d_2)$. 
This shows that $d_1 \geq d_2 \geq 0$ and $d_1 \geq 0 \geq d_2$ are tropical matroid strata,
corresponding to  (\ref{eq:deltadelta2}) and  to Example~\ref{sparse-negative} with
$d_2$ replaced by  $-d_2$. 
\hfill$\Diamond$ 
\end{exmp}   

\begin{rem}
  The verification that two configurations lie in the same tropical
  matroid stratum may involve a huge number of maximal flags if we use
  Definition~\ref{def:strata} as it is.  In practice, we can greatly
  reduce the number of signs of determinants to be checked, by
  utilizing a coarser fan structure on $ \mathcal{T}({\ker}(A))$.  The
  coarsest fan structure is given by the {\em irreducible flats} and
  their {\em nested sets}, as explained in \cite{FS2005}.  Rather than
  reviewing these combinatorial details for arbitrary matrices, we
  simply illustrate the resulting reduction in complexity when
  $M^*(A)$ is the {\em uniform matroid}.  This means that any $d$
  columns of $A$ form a basis of $\RR^d$.  Then, $M^*(A)$ has
  ${(m-d-1)! {{m}\choose{m-d-1}}}$ maximal flags $J_1 \subset J_2
  \subset \cdots \subset J_{m-d-1}$ constructed as follows. Let
  $I=\{i_1,\dots,i_{m-d-1}$ be an $(m-d-1)$-subset of $[m]$ and
  $\sigma$ a permutation of $[m-d-1]$.  Then, we set $J_k :=
  [m]\sminus\{i_{\sigma(1)},\dots,i_{\sigma(k)}\}$.  It is clear that
  the sign of ${\det} (\, A^T, e_{J_1}, e_{J_2}, \ldots,
  e_{J_{m-d-1}}, e_k \,)$ is completely determined by the signs of the
  determinants
$${\det} (\, A^T, e_{i_1}, e_{i_2}, \ldots, e_{i_{m-d-1}}, e_k \,),$$ 
where $i_1 < i_2 < \cdots < i_{m-d-1}$.  Hence, we only need to check
${ {{m}\choose{m-d-1}}}$ conditions. 
\end{rem}

Recall that the
\emph{$A$-discriminant cycle} $\tilde{\Delta}_A= \Delta_A^{i(A)}$
is effective of codimension $1$, provided
$A$ is non-defective. The lattice index $i(A)$ is
the gcd of all maximal minors of~$A$.

\begin{thm}\label{thm:degreeDelta}
  The degree of the $A$-discriminant cycle is piecewise
  linear in the Pl\"ucker coordinates on $G(d,\{[m]\})$. It is
  linear on  the tropical matroid strata. The formulas on
  maximal strata are unique  modulo the linear forms 
  obtained from the entries of $ \xi \wedge e_{[m]}$.
    \end{thm}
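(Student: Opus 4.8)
The plan is to compute the degree of the $A$-discriminant cycle $\tilde\Delta_A$ as a tropical intersection number, using the combinatorial description of the tropical discriminant $\mathcal{T}(\Delta_A)$ from~\cite{DFS} recalled in~\eqref{eq:tropDA}. Since $\tilde\Delta_A$ is effective of codimension $1$ (when $A$ is non-defective), its degree is the stable intersection of the tropical cycle $\mathcal{T}(\Delta_A) = \mathcal{T}(\ker A) + \operatorname{rowspan}(A)$ with a generic tropical line in $\mathbb{P}^{m-1}$, or equivalently the coefficient extracted by pairing against a suitable tropical hyperplane class. Concretely, I would fix a generic linear functional, shoot a ray in $\mathbb{R}^m$ in a generic direction, and count (with multiplicities) the intersections with the $(m-1)$-dimensional fan $\mathcal{T}(\Delta_A)$; this is precisely the ray-shooting algorithm of \cite[Theorem~2.2]{DFS} that the text already advertises. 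The output of that algorithm is a sum of contributions, one for each pair $(\mathcal{J}, i)$ where $\mathcal{J}$ is a maximal chain of flats of $M^*(A)$ and $i\in[m]$, of the form (lattice multiplicity of the cone $\operatorname{span}(e_{J_1},\dots,e_{J_{m-d-1}}, e_i)$ intersected with $\operatorname{rowspan}(A)$) times (an indicator that the shot ray actually meets that cone).

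The second step is to observe that both ingredients of each contribution are controlled by the sign data fixed in Definition~\ref{def:strata}. The lattice multiplicity attached to the cone over $\mathcal{J}$ together with $\operatorname{rowspan}(A)$ is, up to the global normalization by $i(A)$, exactly $|{\det}(M(A,\mathcal{J},i))|$, since $M(A,\mathcal{J},i)=(A^T, e_{J_1},\dots,e_{J_{m-d-1}}, e_i)$ and its determinant equals $\xi\wedge e_{J_1}\wedge\cdots\wedge e_{J_{m-d-1}}\wedge e_i$. Meanwhile, whether the generically-shot ray crosses the relative interior of a given maximal cone of $\mathcal{T}(\Delta_A)$ depends only on the arrangement of these cones, which in turn is a matroid-plus-sign datum: the underlying matroid $M^*(A)$ determines which subsets $e_{J_\bullet}$ span cones, and the signs of the $\det(M(A,\mathcal{J},i))$ determine the local orientation/position of $\operatorname{rowspan}(A)$ relative to each wall. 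Therefore, as $\xi$ varies inside one tropical matroid stratum, the set of pairs $(\mathcal{J},i)$ that contribute is constant, and each contributing term is $\pm\,\det(M(A,\mathcal{J},i))/i(A)$ with a fixed sign; summing them gives a single linear form in the Plücker coordinates of $\xi$ (the $i(A)$ cancels because it is the gcd of the very minors being summed, so $\deg\tilde\Delta_A = \deg(\Delta_A)^{i(A)}$ absorbs it into an honest linear combination of maximal minors — this bookkeeping is the routine-but-delicate part). This reproves, and generalizes beyond $n=2$, the piecewise-linearity already visible in Theorem~\ref{thm:formula1}.

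The third step is uniqueness modulo $\xi\wedge e_{[m]}$. The Plücker coordinates of $\xi\in G(d,\{[m]\})$ are constrained precisely by the $m-d$ linear relations expressing that $e_{[m]}$ lies in $\operatorname{rowspan}(A)$, namely the vanishing of the coordinates of $\xi\wedge e_{[m]}$ in $\wedge^{d+1}\RR^m$; any two linear forms agreeing as functions on $G(d,\{[m]\})$ near a point of a maximal stratum differ by an element of the span of these. So I would argue: the degree function restricted to a maximal stratum agrees with the explicit linear form constructed above, and any linear form on $\wedge^d\RR^m$ inducing the same function on the (Zariski-dense-in-its-linear-span) stratum must differ from it by a form in the ideal of linear relations cutting out $G(d,\{[m]\})$ from its ambient flat — which is exactly the span of the entries of $\xi\wedge e_{[m]}$. (The fact that a maximal tropical matroid stratum is full-dimensional in the linear variety $G(d,\{[m]\})$ — hence Zariski dense in the relevant linear subspace — needs a line or two: a generic $\xi$ lies in such a stratum, since generic sign data occur on an open set.)

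The main obstacle I anticipate is the careful translation of the ray-shooting output into the clean statement ``$\deg\tilde\Delta_A = \sum \pm\det(M(A,\mathcal{J},i))$ with signs locally constant.'' Two points require genuine care: first, matching the \emph{tropical multiplicity} assigned by \cite{DFS} to a maximal cone of $\mathcal{T}(\Delta_A)=\mathcal{T}(\ker A)+\operatorname{rowspan}(A)$ (which involves the index of a lattice generated by the Bergman-fan ray directions together with a lattice basis of $\operatorname{rowspan}(A)$) with the single determinant $\det(M(A,\mathcal{J},i))$ — this is where the hypothesis that $e_{[m]}\in\operatorname{rowspan}(A)$ and the precise choice of the Ardila–Klivans simplicial fan structure get used; and second, checking that the \emph{set} of cones met by a generic ray, and the sign with which each is counted, is an invariant of the stratum and not just of the matroid — this is exactly why Definition~\ref{def:strata} records the signs of all the $\det(M(A,\mathcal{J},i))$, not merely $M^*(A)$, so the verification amounts to unwinding that definition against the geometry of the fan. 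Once these two identifications are in place, linearity on strata and the uniqueness clause follow formally.
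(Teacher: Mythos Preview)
Your proposal is correct and follows essentially the same approach as the paper: both extract the degree from the ray-shooting\,/\,initial-monomial formula of~\cite[Theorem~5.2]{DFS}, identify each contribution as $|\det(M(A,\mathcal{J},i))|$, and argue that the contributing set of chains is constant on a tropical matroid stratum. The one point where the paper is more explicit than your sketch is the verification you flag as delicate---that the set of contributing cones depends only on the stratum: the paper passes to a Gale dual $B$, rewrites the condition $\mathcal{J}\in\mathcal{C}_{i,\omega}$ via Cramer's rule as sign conditions on determinants $\det(\sigma_{J_1},\ldots,\sigma_{J_{m-d-1}},b_i)$ (which are fixed on the stratum by Definition~\ref{def:strata}), and then matches cells of the resulting hyperplane arrangements for $A$ and $A'$, allowing possibly \emph{different} generic weights $\omega,\omega'$; you should not expect a single $\omega$ to work uniformly across the stratum.
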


In both Theorem~\ref{thm:degreeMixed} and Theorem~\ref{thm:degreeDelta}, 
the notion of ``degree'' allows for
any grading that makes the 
respective discriminant homogeneous. For the mixed discriminant
$\Delta_{A_1,\ldots,A_n}$ we are
 interested in the $\mathbb{N}^n$-degree.
Theorem~\ref{thm:degreeMixed} will be derived as a corollary
from Theorem~\ref{thm:degreeDelta}. 

 \begin{proof}[Proof of Theorem~\ref{thm:degreeDelta}]
 The uniqueness of the degree formula follows from our earlier remark
that the entries of
$  \xi \wedge e_{[m]}$ are the linear relations on the mixed
Grassmannian $G(d,\{[m]\})$. We now show how tropical geometry leads to
the desired piecewise linear formula.

From the representation of the tropical discriminant in~\eqref{eq:tropDA},
 Dickenstein {\it et al.}~\cite[Theorem 5.2]{DFS}
derived the following formula  for the
 initial monomial of the $A$-discriminant $\Delta_A$
 with respect to any generic weight vector $\ww \in \mathbb{R}^m$.
 The exponent of the variable $x_i$ in the initial monomial ${\init}_\ww(\Delta_A)$ of
  the $A$-discriminant $\Delta_A$ is equal to
\begin{equation}\label{eq:degree1}
\sum_{\mathcal{J} \in {\mathcal C}_{i,\ww} } |\, 
      \det ( A^T, e_{J_1}, \ldots,  e_{J_{m-d-1}}, e_i )  \,| \,  .
\end{equation}
Here, $\mathcal C_{i,\ww}$ is the set of maximal chains $\mathcal{J}$
of $M^*(A)$ such that the rowspan $\xi$ of $A$ has non-zero
intersection with the relatively open cone $\RR_{> 0} \bigl\{ e_{J_1},
\ldots, e_{J_{m-d-1}}, -e_i, -\ww\bigr\}$.

It now suffices to prove the following
statement: if two matrices $A$ and $A'$ lie in the same tropical
matroid stratum, then there exists weight vectors $\ww$ and $\ww'$ such
that $\mathcal C_{i,\ww} = \mathcal C_{i,\ww'}$.  This ensures
that the sum in~\eqref{eq:degree1}, with the absolute value replaced
with the appropriate sign, yields a linear function in the Pl\"ucker
coordinates of $\xi$ for the degree of $\Delta_A$ and $\Delta_{A'}$.

The condition $\mathcal{J} \in \mathcal C_{i,\ww}$ is equivalent 
to the weight vector $\ww$ being in the cone
$$\RR_{> 0} \bigl\{ e_{J_1}, \ldots, e_{J_{m-d-1}}, -e_i\bigr\} \,\,+ \,\, \xi.$$
Hence, it is convenient to work modulo 
$\xi$.  This amounts to considering the exact sequence
\begin{equation*}
\begin{CD}
0 @>>>  \RR^d @>A^T
>>  \RR^k @>\beta>> W @>>> 0.
\end{CD}
\end{equation*}
Choosing a basis for $\ker(A)$, we can identify $W \simeq \RR^{m-d}$.
The columns of the matrix $\beta$ define a vector configuration $B
=\{b_1,\dots,b_m\} \subset \RR^{m-d}$ called a {\em Gale dual
  configuration} of~$A$.  Projecting into $W$, we see that $\mathcal
C_{i,\ww}$ equals the set of all maximal chains $J$ such that
$\beta(\ww)$ lies on the cone $\RR_{> 0} \bigl\{ \beta({e_{J_1}}),
\ldots, \beta(e_{J_{m-d-1}}), -\beta(e_i)\bigr\}$.
It follows that 
\[
\mathcal{J} \in \mathcal C_{i,\ww} \quad \text{if and only if}\quad
\beta(\ww) = \sum_{j=1}^m w_j b_j \,\,\in \,\, \RR_{> 0} \bigl\{
\sigma_{J_1}, \ldots, \sigma_{J_{m-d-1}}, -b_i\bigr\},
\]
where $\sigma_{\mathcal{J}} \,:= \,\beta(e_{\mathcal{J}}) \,\,=
\,\,\sum_{j\in \mathcal{J}} b_j$.

We can also restate the definition of the tropical matroid strata in
terms of Gale duals.  Namely, there exists a non-zero constant $c$,
depending only on $d$, $m$ and our choice of Gale dual $B$, such that,
given a maximal chain of flats
$\mathcal{J}$ 
in the matroid $M^*(A)$, we have:
\begin{equation}
{\det} (\, A^T, e_{J_1}, e_{J_2}, \ldots, e_{J_{m-d-1}}, e_i \,)\,\,
= \,\,c\cdot{\det} (\, \sigma_{J_1}, \sigma_{J_2}, \ldots, \sigma_{J_{m-d-1}}, b_i \,).\label{eq:3}
\end{equation}
Hence, the tropical matroid strata in $G(d,\{[m]\}) $ are determined
by the signs of the determinant on the right-hand side
of~\eqref{eq:3}.  If $\mathcal{J}\in \mathcal C_{i,\ww}$ for generic
$\ww \in \RR^m$, then the vectors $\{\sigma_{J_1}, \ldots,
\sigma_{J_{m-d-1}}, b_i\}$ in $\RR^{m-d}$ are linearly independent.
Let $M(\mathcal{J},B,i)$ be the matrix whose columns are these
vectors.  Then, $\mathcal{J}\in \mathcal C_{i,\ww}$ if and only if the
vector $x = M(\mathcal{J},B,i)^{-1} \beta(\ww)$ has positive entries.
By Cramer's rule, those entries~are
\begin{equation}\label{eq:cramer}
\left\{    \begin{aligned}
x_k & \,\,= \,\,\frac
 {\det (\sigma_{J_1}, \ldots, \sigma_{J_{k-1}}, \beta(\ww), \sigma_{J_{k+1}},
  \ldots, \sigma_{J_{m-d-1}}, -b_i) }{\det(M(\mathcal{J},B,i))} \qquad
\operatorname{ for} \,\, 0\leq k<m-d,\\
 x_m & \,\,= \,\,\frac
 {\det  (\sigma_{J_1}, \sigma_{J_2},\ldots,  \sigma_{J_{m-d-1}}, \beta(\ww)) }{\det(M(\mathcal{J},B,i))}.
  \end{aligned}\right.
 \end{equation} 

 Suppose now that $A$ and $A'$ are two configurations in the same
 tropical matroid stratum.  Let $B$ and $B'$ be their Gale duals.
 Then $M^*(A) = M^*(A')$ and the denominators
 $\det(M(\mathcal{J},B,i))$ and $\det(M(\mathcal{J},B',i))$ 
 in~\eqref{eq:cramer} have the same signs.  On the other hand, let us
 consider the oriented hyperplane arrangement in $\RR^{m-d}$
 consisting of the hyperplanes
 $\Hcal_{\mathcal{J},B,k,i}=\langle\sigma_{J_1}, \ldots,
 \sigma_{J_{k-1}}, \sigma_{J_{k+1}}, \ldots, \sigma_{J_{m-d-1}},
 b_i\rangle$, for $1\leq k \leq m-d-1, \ i \not \in J_{m-d-1}$, as well
 as the hyperplane $\Hcal_{\mathcal{J}}=\langle\sigma_{J_1}, \ldots,
 \sigma_{J_{m-d-1}}\rangle$, for all maximal chains $\mathcal{J} \in
 M^*(A)$ such that $ \sigma_{J_1}, \ldots, \sigma_{J_{m-d-1}}$ are
 linearly independent.  The signs of the numerators in~\eqref{eq:cramer} are determined by the oriented hyperplane
 arrangement just defined.  Since $M^*(A) = M^*(A')$, we can establish
 a correspondence between the cells of the complements of these
 arrangements that preserves the signs in~\eqref{eq:cramer} for both
 $A$ and $A'$, given weights $\ww$ and $\ww'$ in corresponding cells.
 This means that $\mathcal C_{i,\ww} = \mathcal C_{i,\ww'}$ as we
 wanted to show.
 \end{proof}

We note that the conclusion of Theorem~\ref{thm:degreeDelta} is also valid 
on tropical matroid strata  where $A$ is defective. In that case the $A$-discriminant
$\Delta_A$ equals $1$, and its degree is the zero vector. 
 We end this section by showing how to obtain 
 our main result on mixed discriminants.
 
 \begin{proof}[Proof of  Theorem~\ref{thm:degreeMixed}]
 Suppose  that $A$ is the Cayley matrix of $n$ configurations $A_1,\ldots,A_n$
and let  ${\mathcal I} =  \{I_1, \ldots, I_n\}$ be the associated partition of $[m]$.
It follows from~\eqref{eq:degree1} that
$$\deg_{A_k}(\Delta_A) \ =\  \sum_{i\in I_k}\sum_{J \in {\mathcal C}_{i,\ww} } |\, 
\det ( A^T, e_{J_1}, \ldots, e_{J_{m-d-1}}, e_i ) \,| \, .$$ By the
same argument as in the proof of Theorem~\ref{thm:degreeDelta}, we
conclude that the above expression defines a fixed linear form on
$\wedge^d \RR^m$ for all matrices $A$ in a fixed tropical matroid
stratum.
\end{proof}

In closing, we wish to reiterate that
combining Theorem~\ref{thm:degreeMixed} with
Rinc\'on's results in \cite{Rincon} leads to 
powerful algorithms for computing piecewise polynomial
degree formulas. Here is an example that illustrates this.
We consider the $n$-dimensional version of the system
\eqref{eq:deltadelta22}:
\begin{equation*}
f_i \, = c_{i0} + c_{i1} x_1^{d_i} + c_{i2} x_2^{d_i} + \cdots + c_{in} x_n^{d_i} 
\quad \hbox{for} \,\, \, i = 1,2,\ldots,n, 
\end{equation*}
where $ 0 \leq d_1 \leq d_2 \leq \cdots \leq d_n$ are
coprime integers.
The Cayley matrix $A$ has $2n$ rows and $n^2+n$ columns.
Using his software, Felipe
Rinc\'on computed the corresponding tropical discriminant
for $n=4$, while keeping the $d_i$ as unknowns, and he~found 
\begin{equation*}
{\rm deg}_{A_i} (\Delta_{A_1,\ldots,A_n}) \,\,\, = \,\,\,
d_1 \cdots d_{i-1} d_{i+1} \cdots d_n \cdot \bigl(\,d_i + (-n)d_1 + d_2 + d_3 + \cdots + d_n \, \bigr).
\end{equation*}
Thus, we have a computational proof of this formula for $n \leq 4$, 
and it remains a conjecture for $n \geq 5$.
This shows how the findings of this section 
may be used in experimental mathematics.

\medskip
\hfill

\noindent {\bf Acknowledgments:} {MAC was supported by an AXA Mittag-Leffler
  postdoctoral fellowship (Sweden) and an NSF postdoctoral
  fellowship DMS-1103857 (USA).  AD was supported by UBACYT
  20020100100242, CONICET PIP 112-200801-00483 and ANPCyT 2008-0902
  (Argentina). SDR was partially supported by VR grant  NT:2010-5563
  (Sweden). BS was supported by NSF grants
  DMS-0757207 and DMS-0968882 (USA). 
  This project started at the Institut Mittag-Leffler 
during  the Spring 2011 program on
  ``Algebraic Geometry with a View Towards Applications.''
  We thank IML for its wonderful hospitality.}

\bigskip
\bigskip



\bigskip
\bigskip

\begin{small}
\noindent Authors' e-mail:
{\tt cattani@math.umass.edu}, 
{\tt macueto@math.columbia.edu},  \\ 
{\tt alidick@dm.uba.ar}, 
{\tt dirocco@math.kth.se},
{\tt bernd@math.berkeley.edu}
\end{small}

\end{document}